\newtheorem{theorem}{Theorem}[section]
\newtheorem{corollary}[theorem]{Corollary}
\newtheorem{lemma}[theorem]{Lemma}
\newtheorem{proposition}[theorem]{Proposition}
\theoremstyle{definition}
\newtheorem{definition}[theorem]{Definition}
\newtheorem{example}[theorem]{Example}
\newtheorem{remark}[theorem]{Remark}
\numberwithin{equation}{section}
\title[The Hille-Yosida generation theorem for $C_{0}$--random semigroups]{The Hille-Yosida generation theorem for almost surely bounded $C_{0}$--semigroups of continuous module homomorphisms$^1$}
\author[X. Zhang]{Xia Zhang}
\address[X. Zhang]{School of Mathematical Sciences, TianGong University, Tianjin 300387, P.R.China}
\email{{\tt zhangxia@tiangong.edu.cn}}
\author[M. Liu]{Ming Liu}
\address[M. Liu]{School of Mathematical Sciences, TianGong University, Tianjin 300387, P.R.China}
\email{\tt liuming@tiangong.edu.cn}
\author[T. X. Guo]{Tiexin Guo$^*$}
\address[T. X. Guo]{School of Mathematics and Statistics, Central South University,
 Changsha 410083, P.R.China}
\email{{\tt tiexinguo@csu.edu.cn}}
\thanks{$^*$Corresponding author}
\keywords{Random normed module, generation theorem, continuous module homomorphism, almost surely bounded $C_{0}$--semigroup.}
\subjclass[2010]{46A19, 46H25, 60B11, 60G20.}
\begin{document}

\begin{abstract}
In this paper, we first study some properties peculiar to $C_{0}$--semigroups of continuous module homomorphisms and give a characterization for such a $C_{0}$--semigroup to be almost surely bounded. Then, based on these, we establish the Hille-Yosida generation theorem for almost surely bounded $C_{0}$--semigroups of continuous module homomorphisms, which generalizes some known results. Moreover, the counterexample constructed in this paper also shows that it is necessary to require the almost sure boundedness for such $C_{0}$--semigroups.
\end{abstract}

\maketitle


\section{Introduction}\label{section1}

Random functional analysis is functional analysis based on random metric spaces, random normed modules, random inner product modules and random locally convex modules, which are random generalizations of ordinary metric spaces, normed spaces, inner product spaces and locally convex spaces, respectively. Now, random functional analysis has undergone a deep and systematic development \cite{Guo93,Guo96a,Guo08,Guorelations,Guohomomorphism,GL05} and been successfully applied to
the study of conditional convex risk measures and backward stochastic equations \cite{Guoprogress,Guo2019,Guoconvex,Guozhang2019}. A historic survey of random functional analysis can be found in \cite{Guorelations,Guoconvex,Guozhang2019}.

The theory of semigroups of linear operators is an important part of functional analysis and applied in various fields \cite{Engel2000,Goldstein,Pazy}. A.V.Skorohod has widely studied the theory of semigroups of random linear operators in order to study random operator equations and random integral equations in \cite{Skorohod}. Motivated by the idea of \cite{Guo93}, in \cite{Guozhang} we gave stone's representation theorem of a group of random unitary operators on complete complex random inner product modules, which generalized the corresponding result for a group of random unitary operators on a separable Hilbert space in \cite{Skorohod}. In particular, we obtained the fundamental theorem of calculus for a Lipschitz function from a finite real interval to a complete $RN$ module, which has played a crucial role in \cite{Guozhang,Thang,ZhangLiu}. Thang et.al recently established the Hille--Yosida theorem for a contraction semigroup of continuous module homomorphisms on complex complete $RN$ modules. The central purpose of this paper is to establish the Hille--Yosida theorem for an almost surely bounded $C_0$--semigroup of continuous module homomorphisms on complex complete $RN$ modules, where the main difficulty is to provide some techniques giving some important properties peculiar to almost surely bounded semigroups of continuous module homomorphisms. The results of this paper generalizes and improves those in \cite{Guozhang,Thang,ZhangLiu}.

The remainder of this paper is organized as follows: in Section 2 we briefly recall some basic notions and facts; in Section 3 we give a characterization for a $C_{0}$--semigroup of continuous module homomorphisms to be almost surely bounded and in Section 4 on the basis of Section 3 we are devoted to establishing the Hille-Yosida generation theorem for such $C_{0}$--semigroups.

\section{Preliminaries}\label{section2}
Throughout this paper, $N$ denotes the set of positive integers, $K$ the scalar field $R$ of real numbers or $C$ of complex numbers, $(\Omega,\mathcal F,P)$
a given probability space, $\bar{L}^{0}(\mathcal F,R)$
the set of equivalence classes of extended real-valued $\mathcal F-$measurable random variables on $\Omega$, $L^{0}(\mathcal F,K)$
the algebra of equivalence classes of $K$-valued $\mathcal F-$
measurable random variables on $\Omega$ under the ordinary addition, scalar multiplication and multiplication operations on equivalence classes.

\begin{proposition}\rm(see \cite{Dunford})
$\bar{L}^{0}(\mathcal F,R)$ is a complete lattice under the ordering $\leq$: $\xi\leq \eta$ if and only if $\xi^{0}(\omega)\leq\eta^{0}(\omega)$, for almost all $\omega$ in $\Omega$ (briefly, a.s.), where $\xi^{0}$ and $\eta^{0}$ are arbitrarily chosen representatives of $\xi$ and $\eta$, respectively, and has the following three properties:

(1) For each subset $A$ of $\bar{L}^{0}(\mathcal F,R)$, there are two sequences $\{a_{n},~n\in N\}$ and ${\{b_{n},~n\in N}\}$ in $A$ such that $\bigvee_{n\geq1}$ $a_{n}=\bigvee A$ and $\bigwedge_{n\geq1}$ $b_{n}=\bigwedge A$;

(2) If $A$ is directed (dually directed), namely for any two elements $c_1$ and $c_2$ in $A$ there is some $c_3$ in $A$ such that $c_1 \bigvee c_2 \leq c_3 ~  (c_1 \bigvee c_2 \geq c_3)$, then the above $\{a_{n},~n\in N\}$ ($\{b_{n},~n\in N\}$) can be chosen as nondecreasing (nonincreasing);

(3) $L^{0}(\mathcal F,R)$, as a sublattice of $\bar{L}^{0}(\mathcal F,R)$, is conditionally complete.
\end{proposition}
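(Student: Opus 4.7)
The plan is to prove part (1) in detail — it is the classical essential-supremum/essential-infimum theorem of measure theory — and then derive (2) and (3) as short corollaries. The main tool for (1) is a strictly increasing homeomorphism $\varphi:\bar{R}\to[-1,1]$, for instance $\varphi(t)=(2/\pi)\arctan t$ with $\varphi(\pm\infty)=\pm 1$; this replaces arbitrary elements of $\bar{L}^{0}(\mathcal F,R)$ by $[-1,1]$-valued random variables whose expectations are automatically finite, converting the lattice-theoretic question into an elementary variational problem.

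For (1), let $\mathcal C$ denote the collection of at-most-countable subfamilies of $A$. For each $B=\{a_{n}\}\in\mathcal C$ the supremum $s(B):=\bigvee_{n}a_{n}$ exists in $\bar{L}^{0}(\mathcal F,R)$ because the countable pointwise supremum of measurable representatives is measurable. Set $c:=\sup\{E[\varphi\circ s(B)]:B\in\mathcal C\}\in[-1,1]$, choose $B_{k}\in\mathcal C$ with $E[\varphi\circ s(B_{k})]\to c$, let $B_{\infty}:=\bigcup_{k}B_{k}\in\mathcal C$ and $\alpha:=s(B_{\infty})$. Since $\alpha\geq s(B_{k})$ for every $k$ one gets $E[\varphi\circ\alpha]\geq c$, while $\alpha=s(B_{\infty})$ gives the reverse inequality by the definition of $c$; hence $E[\varphi\circ\alpha]=c$. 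Now for any $a\in A$ the family $B_{\infty}\cup\{a\}$ lies in $\mathcal C$ with supremum $\alpha\vee a\geq\alpha$, so $E[\varphi\circ(\alpha\vee a)]\leq c=E[\varphi\circ\alpha]$ together with $\varphi\circ(\alpha\vee a)\geq\varphi\circ\alpha$ and the strict monotonicity of $\varphi$ forces $a\leq\alpha$ a.s. Thus $\alpha=\bigvee A$ and it is realized by the countable subfamily enumerating $B_{\infty}$. The statement for $\bigwedge A$ follows by applying the same argument to $-A:=\{-a:a\in A\}$.

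Property (2) is then immediate: if $A$ is upward directed and $\{a_{n}\}\subseteq A$ satisfies $\bigvee_{n}a_{n}=\bigvee A$ by (1), choose inductively $c_{n}\in A$ with $c_{n}\geq c_{n-1}\vee a_{n}$, which is possible by directedness; then $\{c_{n}\}$ is nondecreasing in $A$, each $a_{n}\leq c_{n}$, and $\bigvee_{n}c_{n}=\bigvee A$. The dually directed case is symmetric. For (3), if $A\subseteq L^{0}(\mathcal F,R)$ has an upper bound $u\in L^{0}(\mathcal F,R)$, then the element $\bigvee A$ produced by (1) satisfies $\bigvee A\leq u<+\infty$ a.s., hence admits a finite-valued representative and lies in $L^{0}(\mathcal F,R)$; the argument for infima bounded below is identical, so $L^{0}(\mathcal F,R)$ is conditionally complete as a sublattice of $\bar{L}^{0}(\mathcal F,R)$.

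The only genuinely non-routine step is the maximality argument in (1): one needs $\varphi$ simultaneously bounded (so that $c$ is a finite real number approached by a genuine sequence) and strictly increasing (so that equality $E[\varphi\circ(\alpha\vee a)]=E[\varphi\circ\alpha]$ compels $a\leq\alpha$ almost surely, not merely in integral mean). The arctangent trick delivers both, and once the candidate $\alpha$ is pinned down the rest is bookkeeping about countable unions of countable sets.
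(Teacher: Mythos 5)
This proposition is quoted in the paper without proof (it is simply cited to Dunford--Schwartz), so there is no in-paper argument to compare against; your proof is the standard essential-supremum construction via a bounded strictly increasing transform and it is correct. The only points left implicit are routine: that $\alpha$ is also a \emph{least} upper bound (any upper bound of $A$ dominates every member of the countable family $B_{\infty}$, hence dominates $\alpha=s(B_{\infty})$), and in (3) that for a nonempty $A\subseteq L^{0}(\mathcal F,R)$ bounded above one also has $\bigvee A>-\infty$ a.s.\ because any single element of $A$ already bounds it below.
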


Specifically, we denote $L^{0}_{+}(\mathcal {F})=\{\xi\in L^{0}(\mathcal {F},R)\,|\,\xi\geqslant 0\}$ and $L^{0}_{++}(\mathcal {F})=\{\xi\in L^{0}(\mathcal{F},R)\,|\,\xi> 0 ~\textrm{on}~  \Omega \}.$

\begin{definition}\rm(see \cite{Guo93,Guorelations})
An ordered pair $(S,\|\cdot\|)$ is called an $RN$ module over $K$ with base $(\Omega,{\mathcal F},P)$ if $S$ is a left module over the algebra $L^{0}({\mathcal F},K)$ and $\|\cdot\|$ is a mapping from $S$ to $L^{0}_{+}(\mathcal {F})$ such that the following three axioms are satisfied:

(1)\, $\|\xi x\|=|\xi|\cdot\|x\|,\forall\xi\in L^{0}({\mathcal F},K)$ and $x\in S$;

(2)\, $\|x+y\|\leq \|x\|+\|y\|,\forall x,y\in S$;

(3)\, $\|x\|=0$ implies $x=\theta$ (the null vector of $S$), where $\|\cdot\|$ is called the $L^{0}-$norm on $S$.
\end{definition}

Let $(S,\|\cdot\|)$ be an $RN$ module over $K$ with base $(\Omega,{\mathcal F},P)$. For any positive real numbers $\varepsilon$ and $\lambda$ such that $\lambda<1$, let $N_{\theta}(\varepsilon,\lambda)=\{x\in X~|~P\{\omega\in \Omega~|~\|x\|(\omega)<\varepsilon\}>1-\lambda\},$ then $\{N_{\theta}(\varepsilon,\lambda)~|~\varepsilon>0,~0<\lambda<1\}$ is a local base at the null vector $\theta$ of some Hausdorff linear topology, and the linear topology is called the $(\varepsilon,\lambda)-$ topology. It should be pointed out that the idea of introducing the $(\varepsilon,\lambda)-$ topology is due to Schweizer and Sklar \cite{SS}. In this paper, given an $RN$ module $(S,\|\cdot\|)$, it is always assumed that $(S,\|\cdot\|)$ is endowed with the $(\varepsilon,\lambda)-$topology. Besides, it is worth noting that a sequence $\{x_{n},n \in N\}$ in $S$ converges to $x\in S$ in the $(\varepsilon,\lambda)-$topology if and only if $\{\|x_{n}-x\|,n \in N\}$ converges to 0 in probability $P$.


\begin{definition}\rm(see \cite{Guohomomorphism})
Let $(S^{1},\|\cdot\|_{1})$ and $(S^{2},\|\cdot\|_{2})$ be two $RN$ modules over $K$ with base $(\Omega,{\mathcal F},P)$. A linear operator $T$ from $S^{1}$ to $S^{2}$ is called a generalized random linear operator (briefly, a random linear operator), and further the random linear operator $T$ is called almost surely bounded (briefly, a.s. bounded) if there exists a $\xi\in L^{0}_{+}(\mathcal {F})$ such that $\|Tx\|_{2}\leq \xi\cdot\|x\|_{1}$ for any $x\in S^{1}$. Denote by $B(S^{1},S^{2})$ the linear space of a.s. bounded random linear operators from $S^{1}$ to $S^{2}$, define $\|\cdot\|:B(S^{1},S^{2})\rightarrow L^{0}_{+}(\mathcal {F})$ by $\|T\|:=\bigwedge\{\xi\in L^{0}_{+}(\mathcal{F})~|~\|Tx\|_{2}\leq \xi\cdot\|x\|_{1},~\forall x\in S^{1}\}$ for any $T\in B(S^{1},S^{2})$, then clearly $(B(S^{1},S^{2}),\|\cdot\|)$ is an $RN$ module.
\end{definition}

The following proposition shows that an a.s. bounded random linear operator on an $RN$ module $S$ is exactly a continuous module homomorphism on $S$.

\begin{proposition}\rm(see \cite{Guohomomorphism})
Let $(S^{1},\|\cdot\|_{1})$ and $(S^{2},\|\cdot\|_{2})$ be two $RN$ modules over $K$ with base $(\Omega,{\mathcal F},P)$. Then we have the following statements:

(1) $T\in B(S^{1},S^{2})$ if and only if $T$ is a continuous module homomorphism;

(2) If $T\in B(S^{1},S^{2})$, then $\|T\|=\bigvee\{\|Tx\|_{2}:x \in S^{1} ~\textrm{and}~\|x\|_{1}\leq 1\},$ where 1 denotes the unit element in $L^{0}({\mathcal F},R)$.
\end{proposition}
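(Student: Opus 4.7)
The plan is to handle (1) first, establishing each implication separately, and then to derive (2) directly from the candidate bound constructed in the proof of (1). For the direction ``a.s.\ bounded $\Rightarrow$ continuous module homomorphism'', suppose $\|Tx\|_{2}\le\xi\|x\|_{1}$. Continuity in the $(\varepsilon,\lambda)$-topology is immediate: $\|x_{n}-x\|_{1}\to 0$ in probability implies $\xi\|x_{n}-x\|_{1}\to 0$ in probability (since $\xi\in L^{0}_{+}(\mathcal F)$ is finite a.s.), and hence $\|T(x_{n}-x)\|_{2}\to 0$ in probability. To upgrade $K$-linearity to $L^{0}$-linearity, the key step is showing $T(I_{A}x)=I_{A}T(x)$ for every $A\in\mathcal F$: $K$-linearity gives $T(x)=T(I_{A}x)+T(I_{A^{c}}x)$, while the bound forces $\|T(I_{A}x)\|_{2}\le\xi I_{A}\|x\|_{1}$ to vanish on $A^{c}$, so $T(I_{A}x)$ is supported on $A$; symmetrically $T(I_{A^{c}}x)$ is supported on $A^{c}$, whence $T(I_{A}x)=I_{A}T(x)$. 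Combining with $K$-linearity yields $T(\eta x)=\eta T(x)$ for every $L^{0}$-step function $\eta$, and approximating a general $\eta\in L^{0}(\mathcal F,K)$ in probability by step functions (and using continuity of $T$ together with continuity of the module action) extends the identity to all of $L^{0}(\mathcal F,K)$.

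For the reverse implication in (1) and for (2), the natural candidate is $\eta:=\bigvee\{\|Tx\|_{2}:x\in S^{1},\,\|x\|_{1}\le 1\}\in\bar L^{0}_{+}(\mathcal F)$; two things must be verified: (i) $\eta\in L^{0}_{+}(\mathcal F)$ and (ii) $\|Tx\|_{2}\le\eta\|x\|_{1}$ for every $x\in S^{1}$. For (ii), decompose $\Omega$ into $D:=\{\|x\|_{1}=0\}$ and $D^{c}$; on $D$ the module-homomorphism identity $T(I_{D}x)=I_{D}T(x)$ combined with $I_{D}x=\theta$ yields $I_{D}Tx=\theta$, while on $D^{c}$ the normalized element $y:=(\|x\|_{1}+I_{D})^{-1}x$ satisfies $\|y\|_{1}\le 1$ and $I_{D^{c}}x=\|x\|_{1}y$, giving $I_{D^{c}}\|Tx\|_{2}=\|x\|_{1}\|Ty\|_{2}\le\|x\|_{1}\eta$; adding the two pieces produces (ii). For (i), by Proposition 2.1 the supremum reduces to a countable one, say $\eta=\bigvee_{n}\|Tx_{n}\|_{2}$ with $\|x_{n}\|_{1}\le 1$; if $P(\eta=+\infty)>0$, then by rescaling the $x_{n}$ by scalars in $L^{0}_{++}(\mathcal F)$ and pasting via disjoint indicator functions on suitable subsets of $\{\eta=+\infty\}$, one constructs a sequence $z_{m}\to\theta$ in $S^{1}$ whose images $\|Tz_{m}\|_{2}$ stay bounded away from $0$ on a set of positive measure, contradicting continuity of $T$ at $\theta$.

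Assertion (2) then follows in two lines: $\eta\le\|T\|$ because any $x$ with $\|x\|_{1}\le 1$ gives $\|Tx\|_{2}\le\|T\|\|x\|_{1}\le\|T\|$, while $\|T\|\le\eta$ because (ii) shows $\eta$ is an admissible $L^{0}$-bound and $\|T\|$ is the infimum of such bounds. The principal obstacle is the absence of a pointwise normalization $x\mapsto x/\|x\|_{1}$: the classical Banach-space trick fails directly because $\|x\|_{1}$ may vanish on a nontrivial set, and the defining supremum of $\eta$ is over an uncountable family in a non-metrizable $L^{0}$-valued lattice. The crucial structural tools that circumvent these difficulties are the identity $T(I_{A}x)=I_{A}T(x)$, which localizes computations to measurable subsets where $\|x\|_{1}$ is well-behaved, and Proposition 2.1, which reduces $\eta$ to a countable join on which continuity of $T$ can be applied.
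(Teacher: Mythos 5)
This proposition is quoted in the paper from \cite{Guohomomorphism} without proof, so there is no in-paper argument to compare against; your proof is correct and follows the standard route of that reference (deriving $T(I_{A}x)=I_{A}Tx$ from the a.s.\ bound plus additivity, extending to $L^{0}$-scalars via step functions and continuity, and proving finiteness of the candidate norm by a resonance/gliding-hump contradiction, which implicitly uses the directedness of $\{\|Tx\|_{2}:\|x\|_{1}\le 1\}$ together with Proposition 2.1(2) to extract a nondecreasing countable subfamily). The only step worth making explicit in assertion (2) is that $\|Tx\|_{2}\le\|T\|\,\|x\|_{1}$ itself requires an argument: the set of admissible bounds $\xi$ is downward directed, so by Proposition 2.1(2) its infimum $\|T\|$ is the a.s.\ limit of a nonincreasing sequence of admissible bounds and is therefore admissible.
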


\begin{definition}\rm(see \cite{Thang})
Let $S$ be an $RN$ module. A mapping $f:[a,b]\rightarrow S$ is said to be $L^{0}$-Lipschitz on a finite real closed interval $[a,b]$ if there exists a $\xi\in L^{0}_{+}(\mathcal{F})$ such that
$\|f(t_{1})-f(t_{2})\|\leq \xi|t_{1}-t_{2}|,~~~~~~\forall t_{1},t_{2}\in[a,b].$
\end{definition}

\begin{remark}
It is easy to check that a mapping $f:[a,b]\rightarrow S$ is $L^{0}$-Lipschitz on $[a,b]$ if and only if the mapping $f$ satisfies the assumption
$\bigvee\{\|\frac{f(t_{1})-f(t_{2})}{t_{1}-t_{2}}\|~|~t_{1},~t_{2}\in[a,b]~\textrm{and}~t_{1} \neq t_{2}\}\in L^{0}_{+}({\mathcal F}).$
\end{remark}

\begin{proposition}(see \cite{Guozhang}) (The fundamental theorem of calculus) Let $S$ be a complete $RN$ module and $f:[a,b]\rightarrow S$ a continuously differentiable function. If $f$ is $L^{0}$-Lipschitz on $[a,b]$, then $f^{'}$ is Riemann integrable and
$f(b)-f(a)=\int_{a}^{b}f^{'}(t)dt.$
\end{proposition}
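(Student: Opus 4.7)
The plan is to prove the two conclusions simultaneously by showing directly that, as the mesh of a tagged partition $(P,\{\tau_{i}\})$ of $[a,b]$ tends to zero, the Riemann sum $\sigma(P,f'):=\sum_{i=1}^{n}f'(\tau_{i})(t_{i}-t_{i-1})$ converges to $f(b)-f(a)$ in the $(\varepsilon,\lambda)$-topology. This delivers Riemann integrability of $f'$ and the identity at once.

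As preparation, since $f'$ is continuous from the compact interval $[a,b]$ into $S$ with the (metrizable) $(\varepsilon,\lambda)$-topology, $f'$ is uniformly continuous: for every $\varepsilon_{0}>0$ and $\lambda_{0}\in(0,1)$ there is $\delta>0$ such that $|s-t|<\delta$ with $s,t\in[a,b]$ implies $\|f'(s)-f'(t)\|\in N_{\theta}(\varepsilon_{0},\lambda_{0})$. The $L^{0}$-Lipschitz hypothesis on $f$, combined with the definition of the derivative in the $(\varepsilon,\lambda)$-topology, also gives the uniform pointwise bound $\|f'(t)\|\leq\xi$ for every $t\in[a,b]$, so that all $L^{0}$-valued suprema of $\|f'(t)\|$ and $\|f'(t)-f'(\tau)\|$ over subintervals of $[a,b]$ exist in $L^{0}_{+}(\mathcal F)$ by Proposition~2.1.

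The central technical step is a mean value inequality in the $RN$-module setting: for any continuously differentiable, $L^{0}$-Lipschitz map $h:[c,d]\to S$, one has $\|h(d)-h(c)\|\leq(d-c)\cdot\bigvee_{t\in[c,d]}\|h'(t)\|$. The natural proof is a continuity-connectedness argument: for each real $\varepsilon>0$, the set $E_{\varepsilon}:=\{s\in[c,d]:\|h(s)-h(c)\|\leq(M+\varepsilon)(s-c)+\varepsilon\}$, with $M:=\bigvee_{t}\|h'(t)\|$, is closed in $[c,d]$ by continuity of $h$, contains $c$, and, using the differentiability $h(s+r)-h(s)-h'(s)r=o(r)$ in the $(\varepsilon,\lambda)$-topology together with $\|h'(s)\|\leq M$, is stable under sufficiently small rightward extension at every one of its points different from $d$; hence $E_{\varepsilon}=[c,d]$, and letting $\varepsilon\downarrow 0$ along a countable sequence yields the inequality. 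Applied on each subinterval $[t_{i-1},t_{i}]$ to $g_{i}(t):=f(t)-f(\tau_{i})-f'(\tau_{i})(t-\tau_{i})$, whose derivative is $g_{i}'(t)=f'(t)-f'(\tau_{i})$, this produces
\[
\|f(t_{i})-f(t_{i-1})-f'(\tau_{i})(t_{i}-t_{i-1})\|\leq(t_{i}-t_{i-1})\cdot\bigvee_{t\in[t_{i-1},t_{i}]}\|f'(t)-f'(\tau_{i})\|.
\]
Summing over $i$ and invoking the telescoping identity $f(b)-f(a)=\sum_{i}[f(t_{i})-f(t_{i-1})]$, the error $\|\sigma(P,f')-(f(b)-f(a))\|$ is controlled by $(b-a)\cdot\eta_{P}$ for some $\eta_{P}\in L^{0}_{+}(\mathcal F)$ which, by the uniform continuity of $f'$, lies in any prescribed $N_{\theta}(\varepsilon_{0},\lambda_{0})$ as soon as the mesh of $P$ is smaller than the corresponding $\delta$. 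This gives the desired convergence.

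The main obstacle is the mean value inequality: unlike the Banach-space case, there is no Hahn-Banach-type reduction to real-valued functionals, so the argument must be carried out intrinsically with $L^{0}_{+}$-valued norms in the $(\varepsilon,\lambda)$-topology, and the interplay between the real perturbation $\varepsilon$ and the $L^{0}$-valued bound $M$ needs careful handling (in particular, the differentiability relation $o(r)/r\to 0$ holds only in probability, and must be promoted to an almost sure inequality at the cost of the slack $+\varepsilon$, typically along subsequences). Completeness of $S$ and Proposition~2.1 are the two structural tools that make the intrinsic treatment possible: the former ensures that Cauchy Riemann sums actually converge in $S$, while the latter guarantees that suprema such as $\bigvee_{t\in[c,d]}\|h'(t)\|$ are realized by countable subfamilies and therefore behave like genuine essential suprema in $L^{0}_{+}(\mathcal F)$.
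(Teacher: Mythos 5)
The paper does not actually prove this proposition --- it is quoted from \cite{Guozhang} --- so the only thing to assess is whether your argument stands on its own, and it does not: the central lemma, the mean value inequality $\|h(d)-h(c)\|\leq(d-c)\cdot\bigvee_{t\in[c,d]}\|h'(t)\|$, is given a proof sketch that never actually uses the $L^{0}$-Lipschitz hypothesis, only continuity, differentiability in the $(\varepsilon,\lambda)$-topology, and the bound $\|h'(s)\|\leq M$. If that argument were valid it would apply verbatim to Example 3.12 of this very paper, where $f(t)$ is the class of $I_{(t,1]}$, $f'\equiv 0$ (so $M=0$), and yet $\|f(1)-f(0)\|=1$; your inequality would force $\|f(1)-f(0)\|\leq 2\varepsilon$ for every $\varepsilon>0$, a contradiction. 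The step that breaks is precisely the ``stable under sufficiently small rightward extension'' claim: differentiability in the $(\varepsilon,\lambda)$-topology only yields $P\{\|o(r)/r\|>\varepsilon_{0}\}<\lambda_{0}$ for small $r$, whereas membership in $E_{\varepsilon}$ is an almost sure order inequality in $L^{0}$. A real additive slack $+\varepsilon$ cannot absorb a deviation that is large on a set of positive (even shrinking) probability --- in Example 3.12 one has $\|o(r)/r\|=\frac{1}{r}I_{(s,s+r]}$, so $E_{\varepsilon}=\{c\}$ for small $\varepsilon$ and the creeping argument dies at the first step. Passing to a.s.\ convergent subsequences does not help either, since the extension step needs control for all small $r$ simultaneously and, more fundamentally, a.s.\ convergence still gives no $\omega$-uniform bound. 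Any correct proof must inject the $L^{0}$-Lipschitz domination (e.g.\ via stratification of $\Omega$ by the sets $\{n-1\leq\xi<n\}$, reducing to real Lipschitz constants on each stratum) at exactly this point; your sketch defers it to a parenthetical remark.

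There is a second gap of the same nature in the final step. You control the error by $(b-a)\cdot\eta_{P}$ with $\eta_{P}$ built from the suprema $\bigvee_{t\in[t_{i-1},t_{i}]}\|f'(t)-f'(\tau_{i})\|$, and claim uniform continuity of $f'$ puts $\eta_{P}$ in $N_{\theta}(\varepsilon_{0},\lambda_{0})$ for small mesh. But uniform continuity only says each individual $\|f'(s)-f'(t)\|$ lies in $N_{\theta}(\varepsilon_{0},\lambda_{0})$, and $N_{\theta}(\varepsilon_{0},\lambda_{0})$ is not stable under the countable suprema supplied by Proposition 2.1: the exceptional sets of measure $<\lambda_{0}$ can accumulate. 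Indeed, $g(t)=I_{(t,1]}$ is continuous and bounded by $1$ in $L^{0}$-norm, yet $\bigvee_{|s-t|\leq\delta}\|g(s)-g(t)\|=1$ for every $\delta>0$, so smallness of the $L^{0}$-valued oscillation is not a consequence of continuity plus $L^{0}$-boundedness. Here too the $L^{0}$-Lipschitz hypothesis (which gives the $L^{0}$-oscillation bound $\xi\cdot\delta$ for $f$ itself, though not directly for $f'$) is what must carry the argument, and your write-up does not show how.
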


\section{A characterization for a $C_{0}$--semigroup of continuous module homomorphisms to be almost surely bounded}\label{section3}

The purpose of this section is to give a characterization for a $C_{0}$--semigroup of continuous module homomorphisms to be almost surely bounded, which reflects the nature of a $C_{0}$--semigroup of continuous module homomorphisms. For the reader's convenience, let us first recall some known results as follows.

\begin{definition}\rm(see \cite{Zhang})
Let $(S,\|\cdot\|)$ be an $RN$ module, $B(S)$ the set of continuous module homomorphisms from $S$ to $S$. Then a family $\{T(t):t\geq0\}\subset B(S)$ is called a semigroup of continuous module homomorphisms if $T(0)=I~~\textrm{and}~~T(s)T(t)=T(s+t)$ for all $s,t\geq0$, where $I$ denotes the identity operator on $S$. Moreover, the semigroup of continuous module homomorphisms $\{T(t):t\geq0\}$ is said to be strongly continuous if $\lim_{t\downarrow0}T(t)x=x$ for any $x\in S.$ Besides, a strongly continuous semigroup of continuous module homomorphisms on $S$ is also called a $C_{0}$--semigroup.
\end{definition}

\begin{definition}\rm(see \cite{ZhangLiu})
Let $\{T(t):t\geq0\}$ be a semigroup of continuous module homomorphisms on an $RN$ module $S$. Define $D(A)=\{x\in S:\lim_{t\downarrow0}\frac{T(t)x-x}{t}~~exists\}$ and $Ax=\lim_{t\downarrow0}\frac{T(t)x-x}{t}$ for any $x\in D(A)$. Then the mapping $A:D(A)\rightarrow S$ is called the infinitesimal generator of $\{T(t):t\geq0\}$, also denoted by $(A,D(A))$ in this paper.
\end{definition}

In the sequel of this paper, we always assume that $(S,\|\cdot\|)$ is a complete $RN$ module over $K$ with base $(\Omega,{\mathcal F},P)$.
It is well known that a continuous function from a finite real closed interval to a Banach space is bounded. But unlike the classical case, Example 2.3 in \cite{Guozhang} shows that, for a $C_{0}$--semigroup $\{T(t):t\geq0\}$, and for any $x\in S$, $\{\|T(t)x\|:t\in [a,b]\}$ may not be a.s. bounded, so it is necessary to define the notions of a.s. bounded and a.s.u. bounded $C_{0}$--semigroups.

\begin{definition}(see \cite{Zhang})
A $C_{0}$--semigroup $\{T(t):t\geq0\}$ is said to be almost surely uniformly bounded (briefly, a.s.u. bounded) if $\bigvee_{t\geq0}\ {\|T(t)\|}$ belongs to $L^{0}_{+}({\mathcal F})$, i.e., there exists a $\xi\in L^{0}_{+}({\mathcal F})$ such that $\|T(t)x\|\leq\xi\|x\|,~~\forall t\geq0~~and~~x\in S.$
\end{definition}

Set $A_{t}=\frac{T(t)-T(0)}{t}~~(\forall t>0),$ and $A$ denotes the infinitesimal generator of $\{T(t):t\geq0\}$, i.e., $Ax=\lim_{t\downarrow0}A_{t}x$ for any $x\in D(A)$. It is known from \cite{Zhang} that in general $A$ is a module homomorphism and $D(A)$ is dense in $S$. The following Proposition 3.4 shows that for any $x\in D(A)$, $\{A_{t}x,~t>0\}$ is a.s. bounded. It should be pointed out that this special property of a.s.u. bounded
$C_{0}$--semigroups plays a crucial role in the proof of Lemma 3.7.

\begin{proposition}(see \cite{ZhangLiu})
Let $\{T(t):t\geq0\}$ be an a.s.u. bounded $C_{0}$--semigroup with the infinitesimal generator $(A, D(A))$. Then for any $x\in D(A)$, $\bigvee_{t>0}\ {\|A_{t}x\|}$ belongs to $L^{0}_{+}({\mathcal F})$.
\end{proposition}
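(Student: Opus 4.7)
The plan is to exploit the semigroup law via a telescoping identity that rewrites $A_{t}x$ as an average of translates of $A_{t/n}x$, and then to pass to the limit $n\to\infty$. First I would write, for every $n\in N$,
\[
T(t)x-x \;=\; \sum_{k=0}^{n-1}\bigl[T((k+1)t/n)x - T(kt/n)x\bigr] \;=\; \sum_{k=0}^{n-1} T(kt/n)\bigl[T(t/n)x - x\bigr],
\]
which follows from the semigroup property $T(kt/n)T(t/n)=T((k+1)t/n)$ together with the module-homomorphism property of each $T(kt/n)$. Dividing through by $t$ and using $T(t/n)x-x=(t/n)A_{t/n}x$ gives
\[
A_{t}x \;=\; \frac{1}{n}\sum_{k=0}^{n-1} T(kt/n)\,A_{t/n}x.
\]

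Next I would take the $L^{0}$-norm of both sides. Applying the triangle inequality together with the a.s.u.\ boundedness (picking $\xi\in L^{0}_{+}(\mathcal F)$ with $\|T(s)\|\leq \xi$ for every $s\geq 0$) yields the key estimate
\[
\|A_{t}x\| \;\leq\; \frac{1}{n}\sum_{k=0}^{n-1}\|T(kt/n)\|\cdot\|A_{t/n}x\| \;\leq\; \xi\cdot\|A_{t/n}x\|
\]
for every $n\in N$. The crucial observation is that the left-hand side is \emph{independent} of $n$, so the whole argument reduces to letting $n\to\infty$ on the right.

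Since $x\in D(A)$, $A_{t/n}x\to Ax$ in the $(\varepsilon,\lambda)$-topology, i.e.\ in probability, and hence $\|A_{t/n}x\|\to\|Ax\|$ in probability as well. To transfer this through the a.s.\ inequality I would invoke Riesz's theorem to extract a subsequence $\{n_{j}\}$ along which $\|A_{t/n_{j}}x\|\to\|Ax\|$ almost surely; since $\|A_{t}x\|\leq \xi\cdot\|A_{t/n_{j}}x\|$ holds a.s.\ for every $j$, passing to the limit along this subsequence gives $\|A_{t}x\|\leq \xi\cdot\|Ax\|$ a.s.\ for each fixed $t>0$. The resulting bound is uniform in $t$, so
\[
\bigvee_{t>0}\|A_{t}x\| \;\leq\; \xi\cdot\|Ax\| \;\in\; L^{0}_{+}(\mathcal F),
\]
which is the desired conclusion.

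The main conceptual step will be spotting that the telescoping identity, when combined with the uniform operator-norm bound $\xi$, turns an average of operators into a bound on $\|A_{t}x\|$ in terms of $\|A_{t/n}x\|$ that is independent of $n$; once that is in hand the only technical point is the standard Riesz subsequence passage needed to convert the in-probability convergence of $A_{t/n}x$ to $Ax$ into a valid limit inside the a.s.\ inequality.
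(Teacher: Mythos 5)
Your proof is correct: the telescoping identity $A_{t}x=\frac{1}{n}\sum_{k=0}^{n-1}T(kt/n)A_{t/n}x$, the uniform bound $\|T(s)\|\leq\xi$ from a.s.u.\ boundedness, and the Riesz subsequence passage from convergence in probability to an a.s.\ limit together give $\bigvee_{t>0}\|A_{t}x\|\leq\xi\|Ax\|\in L^{0}_{+}(\mathcal F)$ exactly as claimed. The paper states Proposition 3.4 without proof, citing \cite{ZhangLiu}, and your argument is essentially the standard one used there, so there is nothing to add.
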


\begin{theorem}(see \cite{ZhangLiu})
 Let $(S,\|\cdot\|)$ be a complete $RN$ module over $K$ with base $(\Omega,{\mathcal F},P)$ and $\{T(t):t\geq0\}$ an a.s.u. bounded $C_{0}$--semigroup of continuous module homomorphisms on $S$. Then the infinitesimal generator $A$ of $\{T(t):t\geq0\}$ is densely defined on $S$ and satisfies
$\frac{dT(t)x}{dt}=AT(t)x=T(t)Ax,~\forall x\in D(A)$
and
$T(r)x-x=\int_{0}^{r}T(s)Axds=\int_{0}^{r}AT(s)xds,~\forall x\in D(A)~and~r>0.$
\end{theorem}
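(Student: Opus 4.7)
The plan is to first note that density of $D(A)$ in $S$ for an arbitrary $C_{0}$--semigroup of continuous module homomorphisms is already recorded in \cite{Zhang} (and explicitly recalled in the paragraph preceding Proposition 3.4), so the density assertion is inherited without invoking a.s.u. boundedness; the substantive content of the theorem is the derivative identity and the integral representation. Throughout, I would fix a representative $\xi\in L^{0}_{+}(\mathcal{F})$ with $\|T(t)\|\leq\xi$ for every $t\geq 0$, and, for a given $x\in D(A)$, set $\eta=\bigvee_{h>0}\|A_{h}x\|$, which belongs to $L^{0}_{+}(\mathcal{F})$ by Proposition 3.4.

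For the derivative identity, I would fix $x\in D(A)$ and $t\geq 0$ and use the semigroup law $T(t+h)=T(t)T(h)$ together with the continuity of the module homomorphism $T(t)$ (Proposition 2.4) to compute
\[
\frac{T(t+h)x-T(t)x}{h}=T(t)\Bigl(\frac{T(h)x-x}{h}\Bigr)\longrightarrow T(t)Ax
\]
as $h\downarrow 0$. Since the same difference quotient also equals $\tfrac{1}{h}(T(h)T(t)x-T(t)x)$, this simultaneously proves $T(t)x\in D(A)$, identifies $AT(t)x=T(t)Ax$, and shows that the right derivative of $T(\cdot)x$ at $t$ equals $T(t)Ax$. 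For the left derivative at $t>0$, I would split
\[
\frac{T(t)x-T(t-h)x}{h}-T(t)Ax=T(t-h)(A_{h}x-Ax)+\bigl(T(t-h)Ax-T(t)Ax\bigr),
\]
dominate the first summand by $\xi\|A_{h}x-Ax\|$ (which tends to $0$ since $x\in D(A)$), and handle the second via the estimate $\|T(t-h)y-T(t)y\|\leq\xi\|y-T(h)y\|$ with $y=Ax$, which tends to $0$ by the strong continuity of the semigroup at the origin. Hence the two one-sided derivatives both equal $T(t)Ax$ and $t\mapsto T(t)x$ is continuously differentiable.

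For the integral identity, I would apply Proposition 2.7 to $f(t)=T(t)x$ on $[0,r]$. Continuous differentiability has just been obtained, so what remains is the $L^{0}$-Lipschitz hypothesis: for $0\leq t_{1}<t_{2}\leq r$,
\[
\|T(t_{2})x-T(t_{1})x\|=\|T(t_{1})(T(t_{2}-t_{1})x-x)\|\leq\xi(t_{2}-t_{1})\|A_{t_{2}-t_{1}}x\|\leq\xi\eta(t_{2}-t_{1}),
\]
which supplies the $L^{0}$-Lipschitz constant $\xi\eta\in L^{0}_{+}(\mathcal{F})$. Proposition 2.7 then delivers $T(r)x-x=\int_{0}^{r}T(s)Ax\,ds$, and the already established identity $T(s)Ax=AT(s)x$ converts this into $\int_{0}^{r}AT(s)x\,ds$. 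The main obstacle throughout is the treatment of the left derivative and the verification of the $L^{0}$-Lipschitz property; both rest on the conjunction of a.s.u. boundedness (dominating $\|T(t-h)\|$ uniformly by $\xi$) and Proposition 3.4 (dominating $\|A_{h}x\|$ uniformly in $h$ by $\eta$), which are precisely the two ingredients that allow the classical Banach-space argument to survive the passage to the $L^{0}$-valued norm.
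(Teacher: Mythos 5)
Your proposal is correct and follows exactly the route the paper intends for this result: the theorem is imported from \cite{ZhangLiu} without a written-out proof here, but the surrounding text makes clear that the argument rests on Proposition 3.4 (a.s.\ boundedness of $\{A_{t}x : t>0\}$) to supply the $L^{0}$-Lipschitz constant and on the fundamental theorem of calculus (Proposition 2.7) to pass from the derivative identity to the integral one, which is precisely what you do. Your handling of the left derivative via the decomposition $T(t-h)(A_{h}x-Ax)+(T(t-h)Ax-T(t)Ax)$, dominated uniformly by the a.s.u.\ bound $\xi$, is the standard and correct adaptation of the Banach-space argument.
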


In 2019, in order to generalize the above Theorem 3.5, Thang et.al proposed Definition 3.6 below.

\begin{definition}\rm (see \cite{Thang})
A $C_{0}$--semigroup $\{T(t):t\geq0\}$ is said to be a.s. bounded if there exists a finite real number $L>0$ such that $\bigvee_{t\in[0,L]}\ {\|T(t)\|}\in L^{0}_{+}({\mathcal F}),$ i.e., there exists a $\xi_{L}\in L^{0}_{+}({\mathcal F})$ such that $\|T(t)x\|\leq \xi_{L}\|x\|,~~~~\forall t\in[0,L]~~and~~x\in S.$
\end{definition}

In \cite{Thang}, the most crucial step is to generalize the above Proposition 3.4 from an a.s.u. bounded
 $C_{0}$--semigroup to an a.s. bounded $C_{0}$--semigroup. Unfortunately, the course of this generalization is not strict. In the following, we will give a new and strict proof of this generalization.

\begin{lemma}
Let $\{T(t):t\geq0\}$ be an a.s. bounded $C_{0}$--semigroup on $S$ with the infinitesimal generator $(A,D(A))$. Given $x\in D(A)$, define a function $f: [0, +\infty)\rightarrow S$ by
$f(t)=T(t)x$. Then $f$ is $L^{0}-$Lipschitz on any finite real closed interval $[0,r]$ for any given $x \in D(A)$.
\end{lemma}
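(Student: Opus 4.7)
The plan is to reduce the Lipschitz estimate to an a.s.\ bound on $\|T(h)x - x\|$ linear in $h$, and then exploit the semigroup identity $T(t_1)x - T(t_2)x = T(t_2)[T(t_1 - t_2)x - x]$ for $t_1 \geq t_2$.

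First, I upgrade the hypothesized bound on $[0, L]$ to any $[0, r]$. For $t \in [0, r]$, write $t = k(L/2) + s$ with $s \in [0, L/2)$ and integer $k \leq \lceil 2r/L \rceil$; by the semigroup property $T(t) = T(L/2)^k T(s)$ and submultiplicativity of the $L^0$-operator norm, $\|T(t)\| \leq \xi_L^{\lceil 2r/L\rceil + 1}$, so $\xi_r := \bigvee_{t\in[0,r]} \|T(t)\|$ lies in $L^0_+(\mathcal{F})$.

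Next, for $h \in (0, r]$ and any positive integer $n$, I telescope by the semigroup property: $T(h)x - x = \sum_{k=0}^{n-1} T(kh/n)[T(h/n)x - x]$. Since each $kh/n \in [0, r]$, the triangle inequality combined with $T(h/n)x - x = (h/n) A_{h/n} x$ yields the pointwise $L^0_+(\mathcal{F})$-inequality $\|T(h)x - x\| \leq h\,\xi_r\,\|A_{h/n} x\|$, valid for every $n \geq 1$.

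The crucial step is passing from this $n$-parametrized family of inequalities to an a.s.\ bound independent of $n$. Since $x \in D(A)$, $A_{h/n} x$ converges to $Ax$ in the $(\varepsilon,\lambda)$-topology, hence $\|A_{h/n} x\| \to \|Ax\|$ in probability by the reverse triangle inequality. I extract a subsequence $\{n_j\}$ along which $\|A_{h/n_j} x\| \to \|Ax\|$ almost surely, and take $\liminf_j$ in the inequality above to obtain $\|T(h)x - x\| \leq h\,\xi_r\,\|Ax\|$ a.s. I expect this bootstrapping from convergence in probability to a.s.\ uniform control of a continuum-indexed family to be the main obstacle, since the $(\varepsilon,\lambda)$-topology is strictly weaker than a.s.\ convergence and one cannot naively take a supremum in the index $n$. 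Finally, for $0 \leq t_2 \leq t_1 \leq r$ one gets $\|f(t_1) - f(t_2)\| \leq \|T(t_2)\|\cdot\|T(t_1 - t_2)x - x\| \leq \xi_r \cdot (t_1 - t_2)\,\xi_r \|Ax\|$, yielding an $L^0$-Lipschitz constant $\xi_r^2 \|Ax\| \in L^0_+(\mathcal{F})$ on $[0, r]$ as required.
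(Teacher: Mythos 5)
Your proof is correct, and it takes a genuinely different route from the paper's. The paper proves this lemma by first invoking the exponential growth bound $\|T(t)\|\leq Me^{\tau t}$ (Proposition 3.8), rescaling to $\widetilde{T}(t)=e^{-\tau t}T(t)$ so as to land in the a.s.u.\ bounded case, then citing Proposition 3.4 (the a.s.u.\ bounded version of the statement that $\bigvee_{t>0}\|A_tx\|\in L^{0}_{+}(\mathcal F)$ for $x\in D(A)$, imported from an earlier paper) and transferring the estimate back to $T$ through a fairly long chain of lattice inequalities. You instead iterate the $[0,L]$ bound to control $\bigvee_{t\in[0,r]}\|T(t)\|$ directly, and then the telescoping identity $T(h)x-x=\sum_{k=0}^{n-1}T(kh/n)\bigl[T(h/n)x-x\bigr]$ together with a passage to the limit in probability (via an a.s.\ convergent subsequence of $\|A_{h/n}x\|$, using that the left-hand side is independent of $n$) gives $\|T(h)x-x\|\leq h\,\xi_r\|Ax\|$ for each fixed $h\in(0,r]$. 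Dividing by $h$, this in fact reproves the needed case of Proposition 3.4 from scratch rather than quoting it, so your argument is more self-contained, avoids Proposition 3.8 entirely, and yields the explicit Lipschitz constant $\xi_r^{2}\|Ax\|$. One remark: the ``main obstacle'' you anticipate --- uniform a.s.\ control of a continuum-indexed family --- does not actually arise in your argument, because the bound for each fixed $h$ is obtained separately and the resulting constant $\xi_r\|Ax\|$ is independent of $h$, so the subsequent supremum over $h$ (or over pairs $t_1>t_2$, which by Proposition 2.1 is attained along a countable subfamily) is harmless.
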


\begin{proposition}(see \cite{Thang})
Let $\{T(t):t\geq0\}$ be an a.s. bounded $C_{0}$--semigroup on $S$. Then there exist $M,\tau \in L^{0}_{+}({\mathcal F})$ and $M\geq1$ such that $\|T(t)\|\leq Me^{\tau t},~\forall t\in[0,\infty).$
\end{proposition}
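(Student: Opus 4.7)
The plan is to imitate the classical proof that every $C_{0}$--semigroup on a Banach space admits an exponential growth bound, taking care that all ``constants'' are now random variables in $L^{0}_{+}(\mathcal F)$. By Definition~3.6, there exist $L>0$ and $\xi_{L}\in L^{0}_{+}(\mathcal F)$ with $\|T(t)\|\leq \xi_{L}$ for every $t\in[0,L]$. After replacing $\xi_{L}$ by $\xi_{L}\vee 1$ (still an element of $L^{0}_{+}(\mathcal F)$), I may assume $\xi_{L}\geq 1$ a.s., which is what will eventually yield $M\geq 1$.

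For any $t\geq 0$, I would perform the Euclidean division $t=nL+s$ with $n\in\mathbb N\cup\{0\}$ and $s\in[0,L)$, and then use the semigroup identity $T(t)=T(L)^{n}T(s)$ together with submultiplicativity of the $L^{0}$-operator norm on $B(S)$. Submultiplicativity follows from Proposition~2.5(2): for any $x\in S$, $\|TSx\|\leq \|T\|\,\|Sx\|\leq \|T\|\,\|S\|\,\|x\|$, so $\|TS\|\leq \|T\|\,\|S\|$. Consequently $\|T(t)\|\leq \|T(L)\|^{n}\|T(s)\|\leq \xi_{L}^{\,n+1}$. Since $n\leq t/L$ and $\xi_{L}\geq 1$, pointwise monotonicity gives
$$\xi_{L}^{\,n+1}\leq \xi_{L}\cdot \xi_{L}^{\,t/L}=\xi_{L}\cdot \exp\!\left(\frac{\ln\xi_{L}}{L}\,t\right).$$
Setting $M:=\xi_{L}$ and $\tau:=\ln\xi_{L}/L$ then yields the desired bound $\|T(t)\|\leq M e^{\tau t}$ for every $t\geq 0$.

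Essentially no step is hard once the a.s.\ boundedness hypothesis on $[0,L]$ has been correctly exploited; the argument is the classical one, with the integer quotient $n$ and the remainder $s$ playing their usual roles. The only point that demands brief verification is measurability: one must check that the pointwise logarithm $\ln\xi_{L}$ and the pointwise real powers $\xi_{L}^{t/L}$ really define elements of $L^{0}_{+}(\mathcal F)$, which is immediate from the a.s.\ finiteness of $\xi_{L}$ together with $\xi_{L}\geq 1$.
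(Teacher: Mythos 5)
Your argument is correct and is the standard one: Euclidean division $t=nL+s$, the semigroup law, submultiplicativity of the $L^{0}$-operator norm, and the pointwise estimate $\xi_{L}^{\,n+1}\leq \xi_{L}\,e^{(\ln\xi_{L}/L)t}$ valid once $\xi_{L}$ is replaced by $\xi_{L}\vee 1$. The paper itself offers no proof of this proposition --- it is quoted from the reference [Thang] --- so there is nothing to diverge from; your route is exactly what that reference (and the classical Banach-space argument) does, and the only needed care, namely that $n$, $s$ are deterministic while $\xi_{L}$, $\ln\xi_{L}$ are elements of $L^{0}_{+}(\mathcal F)$ manipulated pointwise a.s., is handled properly. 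One trivial slip: the norm identity you invoke for submultiplicativity is Proposition~2.4(2), not 2.5(2) (2.5 is the definition of $L^{0}$-Lipschitz maps).
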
 We can now prove Lemma 3.7 below.\begin{proof}[Proof of Lemma 3.7 ]\label{lemma3.1proof1}
Since $\{T(t):t\geq0\}$ is a.s. bounded, it follows from Proposition 3.8 that there exist $M,\tau\in L^{0}_{+}({\mathcal F})$ and $M\geq1$ such that $\|T(t)\|\leq Me^{\tau t}$. Further, it is easy to see that $f$ is continuously differentiable and $f^{'}(t)=T(t)Ax=AT(t)x$ for any $t\geq 0$.
Set $\widetilde{T}(t)=e^{-\tau t}T(t)$, then it is easy to check that $\{\widetilde{T}(t):t\geq0\}$ is a
$C_{0}$--semigroup with the infinitesimal generator $\widetilde{A}$, where $\widetilde{A}=A-\tau I$. Moreover, $\|\widetilde{T}(t)\|=\|e^{-\tau t}T(t)\|=e^{-\tau t}\|T(t)\|\leq M,$ i.e.,
\begin{align}
\bigvee_{t\geq0}\|\widetilde{T}(t)\|\in L^{0}_{+}({\mathcal F}),
\end{align}
then it follows from Proposition 3.4 that
\begin{align}
\bigvee_{t>0}\|\frac{\widetilde{T}(t)-I}{t}x\|\in L^{0}_{+}({\mathcal F}).
\end{align}
Thus we have
\begin{align}
&\bigvee\{\|\frac{f(t_{1})-f(t_{2})}{t_{1}-t_{2}}\|~|~t_{1},~t_{2}\in[0,r]~\textrm{and}~t_{1}> t_{2}\}\nonumber\\
&=\bigvee\{\|\frac{T(t_{1})x-T(t_{2})x}{t_{1}-t_{2}}\|~|~t_{1},~t_{2}\in[0,r]~\textrm{and}~t_{1}>t_{2}\}\nonumber\\
&=\bigvee\{\|\frac{e^{\tau t_{1}}\widetilde{T}(t_{1})x-e^{\tau t_{2}}\widetilde{T}(t_{2})x}{t_{1}-t_{2}}\|~|~t_{1},~t_{2}\in[0,r]~\textrm{and}~t_{1}> t_{2}\}\nonumber\\
&=\bigvee\{\|e^{\tau t_{2}}\widetilde{T}(t_{2})\frac{e^{\tau(t_{1}-t_{2})}\widetilde{T}(t_{1}-t_{2})x-x}{t_{1}-t_{2}}\|~|~t_{1},~t_{2}\in[0,r]~\textrm{and}~t_{1}> t_{2}\}\nonumber\\
&\leq e^{\tau r}\cdot\bigvee_{t\geq0}\|\widetilde{T}(t)\|\cdot\bigvee\{\|\frac{e^{\tau(t_{1}-t_{2})}\widetilde{T}(t_{1}-t_{2})x-x}{t_{1}-t_{2}}\|~|~t_{1},~t_{2}\in[0,r]~\textrm{and}~t_{1}> t_{2}\}\nonumber\\
&\leq e^{\tau r}\cdot\bigvee_{t\geq0}\|\widetilde{T}(t)\|\cdot\bigvee\{\|\frac{e^{\tau t}\widetilde{T}(t)-I}{t}x\|~|~t\in(0,r]\}\nonumber
\vspace{10mm}
\end{align}
\begin{align}
&=e^{\tau r}\cdot\bigvee_{t\geq0}\|\widetilde{T}(t)\|\cdot\bigvee\{\|\frac{e^{\tau t}\widetilde{T}(t)-e^{\tau t}I+e^{\tau t}I-I}{t}x\|~|~t\in(0,r]\}\nonumber\\
&\leq e^{\tau r}\cdot\bigvee_{t\geq0}\|\widetilde{T}(t)\|\cdot\{e^{\tau r}\bigvee_{0<t\leq r}\|\frac{\widetilde{T}(t)-I}{t}x\|+\bigvee\{|\frac{e^{\tau t}-1}{\tau t}|~\tau~\|x\|~|~t\in(0,r]\}\}\nonumber\\
&\leq e^{\tau r}\cdot\bigvee_{t\geq0}\|\widetilde{T}(t)\|\cdot\{e^{\tau r}\bigvee_{t>0}\|\frac{\widetilde{T}(t)-I}{t}x\|+\bigvee\{\tau e^{\tau t}\|x\|~|~t\in(0,r]\}\}\nonumber\\
&\leq e^{2\tau r}\cdot\bigvee_{t\geq0}\|\widetilde{T}(t)\|\cdot\{\bigvee_{t>0}\|\frac{\widetilde{T}(t)-I}{t}x\|+\tau\|x\|\}\nonumber\\
&\in L^{0}_{+}({\mathcal F})\nonumber~~~~(according~ to~ (3.1)~and~(3.2)),
\end{align}
i.e., $f(t)$ is $L^{0}-$Lipschitz on any finite real closed interval $[0,r]$,
which completes the proof of Lemma 3.7.
\end{proof}

Based on Theorem 3.5 and Lemma 3.7, we can obtain the following Theorem 3.9.

\begin{theorem}
 Let $\{T(t):t\geq0\}$ be an a.s. bounded $C_{0}$--semigroup on $S$ and $(A, D(A))$ its infinitesimal generator. Then $A$ is a densely defined module homomorphism on $S$ and satisfies



(a) $$\frac{dT(t)x}{dt}=AT(t)x=T(t)Ax$$
for any $x\in D(A)$.

(b) $$T(r)x-x=\int_{0}^{r}T(s)Axds=\int_{0}^{r}AT(s)xds$$
for any $x\in D(A)$ and $r>0$.

\end{theorem}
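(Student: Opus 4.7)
The plan is to reduce the a.s.\ bounded case to the a.s.u.\ bounded setting of Theorem 3.5 by a rescaling, exactly the trick already used in the proof of Lemma 3.7. First I invoke Proposition 3.8 to fix $M\ge 1$ and $\tau\in L^{0}_{+}({\mathcal F})$ with $\|T(t)\|\le Me^{\tau t}$, and set $\widetilde T(t):=e^{-\tau t}T(t)$. A direct check confirms that $\{\widetilde T(t):t\ge 0\}$ is again a $C_{0}$--semigroup, and the bound $\|\widetilde T(t)\|\le M$ makes it a.s.u.\ bounded. The elementary identity
$$\frac{\widetilde T(t)x-x}{t}=e^{-\tau t}\cdot\frac{T(t)x-x}{t}+\frac{e^{-\tau t}-1}{t}\,x,$$
together with $e^{-\tau t}\to 1$ and $(e^{-\tau t}-1)/t\to -\tau$ in probability as $t\downarrow 0$, yields $D(\widetilde A)=D(A)$ and $\widetilde A=A-\tau I$. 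Applying Theorem 3.5 to $\widetilde T$ now shows that $\widetilde A$ is a densely defined module homomorphism, hence so is $A=\widetilde A+\tau I$, settling the first assertion.

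For (a), I plan to differentiate the factorisation $T(t)x=e^{\tau t}\widetilde T(t)x$ via the product rule. Theorem 3.5 applied to $\widetilde T$ supplies $\frac{d}{dt}\widetilde T(t)x=\widetilde A\widetilde T(t)x=\widetilde T(t)\widetilde Ax$, so
$$\frac{d}{dt}T(t)x=\tau T(t)x+e^{\tau t}\widetilde A\widetilde T(t)x.$$
Because $\widetilde A$ is a module homomorphism and therefore $L^{0}$-linear, I can pull the scalar $e^{\tau t}\in L^{0}({\mathcal F},K)$ inside: $e^{\tau t}\widetilde A\widetilde T(t)x=\widetilde A(T(t)x)=AT(t)x-\tau T(t)x$, and the two $\tau T(t)x$ terms cancel to leave $AT(t)x$. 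Starting instead from $\widetilde T(t)\widetilde Ax$ and using $\widetilde Ax=Ax-\tau x$ produces, by the same manipulation, the companion expression $T(t)Ax$.

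For (b) I will apply the fundamental theorem of calculus (Proposition 2.6) to $f(t):=T(t)x$. The $L^{0}$-Lipschitz hypothesis on each $[0,r]$ is exactly Lemma 3.7, while continuous differentiability follows from (a) once $t\mapsto T(t)Ax$ is shown to be continuous; the latter is a standard consequence of the factorisations $T(t+h)Ax-T(t)Ax=T(t)(T(h)Ax-Ax)$ and $T(t)Ax-T(t-h)Ax=T(t-h)(T(h)Ax-Ax)$, the strong continuity at $0$, and the exponential bound of Proposition 3.8. Proposition 2.6 then delivers $T(r)x-x=\int_{0}^{r}T(s)Ax\,ds=\int_{0}^{r}AT(s)x\,ds$. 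The one subtle point in the entire argument is the $L^{0}$-scalar manipulation in (a): the module-homomorphism property of $\widetilde A$ (equivalently, its $L^{0}$-linearity, not merely $K$-linearity) is essential to let $\widetilde A$ commute with the random scalar $e^{\tau t}$; every other step is a careful assembly of Proposition 3.8, Theorem 3.5, Lemma 3.7, and Proposition 2.6.
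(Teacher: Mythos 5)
Your proof is correct and takes essentially the same route the paper intends: the paper obtains Theorem 3.9 from Theorem 3.5 and Lemma 3.7 precisely via the rescaling $\widetilde T(t)=e^{-\tau t}T(t)$ (with $M,\tau$ from Proposition 3.8 and $\widetilde A=A-\tau I$) that is displayed in the proof of Lemma 3.7, followed by the fundamental theorem of calculus (Proposition 2.6) for part (b). Your write-up merely fills in the details the paper leaves implicit, including the correct observation that the $L^{0}$-linearity of $\widetilde A$ is what lets it commute with the random scalar $e^{\tau t}$.
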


In order to give a characterization for a $C_{0}$--semigroup of continuous module homomorphisms to be almost surely bounded,
let us first recall the resonance theorem under the $(\varepsilon,\lambda)-$topology as follows.
\begin{proposition}(see \cite{Guohomomorphism})
Let $(S_{1},\|\cdot\|_{1})$ and $(S_{2},\|\cdot\|_{2})$ be two $RN$ modules over $K$ with base $(\Omega,{\mathcal F},P)$ such that $S_{1}$ is complete and $\{T_{\alpha}:\alpha\in\Lambda\}$ a family of continuous module homomorphisms from $(S_{1},\|\cdot\|_{1})$ to $(S_{2},\|\cdot\|_{2})$. Then $\{T_{\alpha}:\alpha\in\Lambda\}$ is a.s. bounded in $B(S_{1},S_{2})$ if and only if $\{T_{\alpha}(x):\alpha\in\Lambda\}$ is a.s. bounded in $S_{2}$ for each $x\in S_{1}$.
\end{proposition}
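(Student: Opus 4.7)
The forward implication is immediate: if $\eta := \bigvee_\alpha \|T_\alpha\|$ lies in $L^0_+(\mathcal F)$, then $\|T_\alpha x\|_2 \le \|T_\alpha\|\cdot\|x\|_1 \le \eta\|x\|_1$ for all $\alpha$ and all $x\in S_1$, so $\bigvee_\alpha \|T_\alpha x\|_2 \le \eta\|x\|_1 \in L^0_+(\mathcal F)$ and the family is pointwise a.s. bounded in $S_2$.

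For the nontrivial converse the plan is to adapt the classical Banach--Steinhaus argument to the $RN$-module setting. Completeness of $S_1$ as an $RN$ module translates to completeness of the translation-invariant metric $d(x,y)=E[\|x-y\|_1\wedge 1]$ metrizing the $(\varepsilon,\lambda)$-topology, so the Baire category theorem is available. Set $\xi(x):=\bigvee_\alpha\|T_\alpha x\|_2$; by Proposition 2.1 this essential supremum is realized by a countable subfamily, hence $\xi$ is measurable, and continuity of each $T_\alpha$ (Proposition 2.5) together with the fact that convergence in probability yields a.s.\ convergent subsequences makes $\xi$ lower semicontinuous on $S_1$. Since an $L^0_+$-valued quantity is not a priori controlled by any scalar, the naive sublevel sets $\{\xi\le n\}$ need not cover $S_1$; instead I would introduce the probabilistic strata
\begin{equation*}
F_{n,k}=\{x\in S_1 : P(\xi(x)>n)\le 1/k\},\qquad n,k\in N,
\end{equation*}
which are closed by LSC of $\xi$ and, for each fixed $k$, exhaust $S_1$ because $P(\xi(x)>n)\downarrow 0$ as $n\uparrow\infty$ for every $x$. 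Baire then furnishes, for each $k$, an index $n_k$ such that $F_{n_k,k}$ has nonempty interior containing a neighbourhood $x_k+N_\theta(\varepsilon_k,\lambda_k)$; subtracting the centre and using sublinearity of $\xi$ yields a local probabilistic bound $P(\xi(y)>2n_k)\le 2/k$ for all $y\in N_\theta(\varepsilon_k,\lambda_k)$.

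The main obstacle, and the step with no Banach-space analogue, is passing from these probabilistic local estimates to a single $\eta\in L^0_+(\mathcal F)$ with $\xi(x)\le\eta\|x\|_1$ for every $x\in S_1$; one cannot simply inflate a scalar constant to absorb $L^0$-valued fluctuations. My plan is to stratify $\Omega$: applying the local bound to rescaled vectors (using the $L^0$-homogeneity $\xi(\alpha x)=|\alpha|\xi(x)$ for scalar $\alpha$) produces, for each $k$, an $\mathcal F$-measurable set $B_k$ with $P(B_k)\ge 1-2/k$ on which a scalar bound $\xi(x)\le M_k\|x\|_1$ holds uniformly in $x$; passing to an essential partition refinement $\{C_k\}$ of $\Omega$ and invoking the countable concatenation property of $L^0(\mathcal F,K)$ (inherited by the complete $RN$ module $S_1$) lets me paste these partial bounds into the single element $\eta=\sum_k M_k\mathbf 1_{C_k}\in L^0_+(\mathcal F)$. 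This $\eta$ dominates each $\|T_\alpha\|$ a.s., giving $\bigvee_\alpha\|T_\alpha\|\le\eta\in L^0_+(\mathcal F)$ as required.
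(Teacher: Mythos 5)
The paper does not actually prove Proposition 3.10 --- it is quoted from \cite{Guohomomorphism} --- so there is no in-paper argument to compare against; I therefore assess your proposal on its own. The forward implication and the Baire-category half of the converse are sound: the metric $d(x,y)=E[\|x-y\|_{1}\wedge 1]$ is complete and metrizes the $(\varepsilon,\lambda)$-topology, the sets $F_{n,k}$ are indeed closed (check the bound $P(\cdot>n)\le 1/k$ first for the finite sub-suprema $\max_{\alpha\in F}\|T_{\alpha}y\|_{2}$, which are continuous, then let $F$ increase and use Proposition 2.1), and translation plus subadditivity of $\xi$ followed by the rescaling $y\mapsto(\varepsilon_{k}/2)y$ gives $P(\xi(x)>M_{k})\le 2/k$ for every $x$ with $\|x\|_{1}\le 1$, where $M_{k}=4n_{k}/\varepsilon_{k}$ is a genuine real constant.

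The gap is in your last step. What the rescaling delivers is a bound whose exceptional set $\{\xi(x)>M_{k}\|x\|_{1}\}$ depends on $x$; the asserted existence of a single measurable $B_{k}$ with $P(B_{k})\ge 1-2/k$ on which $\xi(x)\le M_{k}\|x\|_{1}$ holds \emph{uniformly in} $x$ is exactly the uniformity the theorem is meant to establish, and nothing in the sketch produces it --- the exceptional sets of different $x$'s could a priori sweep out all of $\Omega$, so the subsequent partition $\{C_{k}\}$ and concatenation have nothing to paste. The missing ingredient is the module structure of the problem: for $\|x_{1}\|_{1}\le 1$ and $\|x_{2}\|_{1}\le 1$ set $B=\{\xi(x_{1})\ge\xi(x_{2})\}$ and $z=I_{B}x_{1}+I_{B^{c}}x_{2}$; then $\|z\|_{1}\le 1$ and, because each $T_{\alpha}$ is a module homomorphism, $\xi(z)=I_{B}\xi(x_{1})+I_{B^{c}}\xi(x_{2})=\xi(x_{1})\vee\xi(x_{2})$. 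Hence the family $\{\xi(x)\,|\,\|x\|_{1}\le 1\}$ is directed upward with supremum $\zeta=\bigvee_{\alpha}\|T_{\alpha}\|$ (by Proposition 2.5(2)), so Proposition 2.1(2) yields a nondecreasing sequence $\xi(z_{m})\uparrow\zeta$ with $\|z_{m}\|_{1}\le 1$, and therefore $P(\zeta>M_{k})=\lim_{m}P(\xi(z_{m})>M_{k})\le 2/k$ for every $k$, giving $\zeta<+\infty$ a.s., i.e.\ $\zeta\in L^{0}_{+}(\mathcal F)$. With this directedness argument in place of the unproved sets $B_{k}$, your proof closes.
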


\begin{theorem}
Let $\{T(t):t\geq0\}$ be a $C_{0}$--semigroup on $S$ with the infinitesimal generator $(A,D(A))$ and $f$ be the same as in Lemma 3.7.
Then $\{T(t):t\geq0\}$ is a.s. bounded if and only if $f$ is $L^{0}-$Lipschitz on any finite real closed interval $[0,r]$ for any given $x \in D(A)$.
\end{theorem}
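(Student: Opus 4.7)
The forward implication is exactly Lemma 3.7, so the substance of the theorem is the converse. Assume that for every $x\in D(A)$ and every $r>0$, $f(t)=T(t)x$ is $L^{0}$--Lipschitz on $[0,r]$, and the goal is to deduce that $\{T(t):t\geq 0\}$ is a.s. bounded in the sense of Definition 3.6.

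The first step is to convert the Lipschitz hypothesis into a pointwise a.s. boundedness statement on $D(A)$. Let $\xi_{x,r}\in L^{0}_{+}(\mathcal F)$ denote the $L^{0}$--Lipschitz constant of $f$ on $[0,r]$. Setting $t_{2}=0$ yields $\|T(t)x-x\|\leq t\xi_{x,r}$ for $t\in[0,r]$, hence
$$\bigvee_{t\in[0,r]}\|T(t)x\|\leq \|x\|+r\xi_{x,r}\in L^{0}_{+}(\mathcal F),\qquad x\in D(A).$$
Our target bound $\bigvee_{t\in[0,r]}\|T(t)\|\in L^{0}_{+}(\mathcal F)$ is, by the random resonance theorem (Proposition 3.10) on the complete $RN$--module $S$, equivalent to the pointwise bound $\bigvee_{t\in[0,r]}\|T(t)x\|\in L^{0}_{+}(\mathcal F)$ for \emph{every} $x\in S$, not just for $x\in D(A)$.

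The main obstacle is therefore the extension of this pointwise bound from the dense submodule $D(A)$ to all of $S$. A naive triangle--inequality argument via $x_{n}\to x$ with $x_{n}\in D(A)$ is circular, because $\|T(t)x\|\leq\|T(t)(x-x_{n})\|+\|T(t)x_{n}\|$ reintroduces the operator norm $\|T(t)\|$ one is trying to control. The strategy I would pursue is to first apply Proposition 3.10 on the graph--norm $RN$--module $(D(A),\|\cdot\|_{A})$ with $\|x\|_{A}=\|x\|+\|Ax\|$; this submodule is complete because $A$ is closed (a standard consequence of the $C_{0}$--semigroup axioms), and the pointwise bound above then lifts to a uniform estimate $\|T(t)x\|\leq M(\|x\|+\|Ax\|)$ for all $t\in[0,r]$ and $x\in D(A)$, with some $M\in L^{0}_{+}(\mathcal F)$.

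The final and hardest step is to eliminate the $\|Ax\|$ contribution and extend to arbitrary $x\in S$. I would attempt a Yosida--type regularization, exploiting the semigroup law and the identity on $D(A)$ from Theorem 3.9 that relates trajectory increments to $Ax$, in order to trade the graph--norm contribution for a multiple of $\|x\|$ alone. Once $\bigvee_{t\in[0,r]}\|T(t)x\|\in L^{0}_{+}(\mathcal F)$ is established for every $x\in S$, a second application of Proposition 3.10 delivers $\bigvee_{t\in[0,r]}\|T(t)\|\in L^{0}_{+}(\mathcal F)$, i.e., a.s. boundedness with constant $L=r$. I expect this regularization to be the principal difficulty, since the Banach--space mollification $h^{-1}\int_{0}^{h}T(s)x\,ds$ is not directly available here (Proposition 2.7 presupposes an $L^{0}$--Lipschitz integrand, precisely what we are trying to establish for general $x$), so a workaround adapted to the $RN$--module structure must be engineered.
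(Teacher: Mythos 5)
Your forward direction and the first step of the converse (deducing $\bigvee_{t\in[0,r]}\|T(t)x\|\leq\|x\|+r\,\xi_{x,r}\in L^{0}_{+}({\mathcal F})$ for $x\in D(A)$) match the paper. But the proposal does not actually prove the converse: the decisive step --- passing from this pointwise bound on the dense submodule $D(A)$ to a pointwise bound on all of $S$, so that Proposition 3.10 can be applied --- is left as an announced plan (``I would attempt a Yosida--type regularization \dots a workaround must be engineered'') rather than an argument, so there is a genuine gap exactly at the point you yourself identify as the principal difficulty. The intermediate device you propose is also shaky on its own terms: completeness of $(D(A),\|\cdot\|_{A})$ requires $A$ to be closed, and in this random setting closedness of the generator is nowhere established for a $C_{0}$--semigroup that is not yet known to be a.s.\ bounded (the paper's Theorem 3.9 and its consequences are stated only for a.s.\ bounded semigroups), so invoking it here risks precisely the circularity you are trying to avoid.

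For comparison, the paper's converse is far more direct: from $\|T(t)x\|\leq\xi t+\|x\|$ on $[0,r]$ for $x\in D(A)$ it extends the pointwise bound to every $y\in S$ by taking $x_{n}\to y$ with $x_{n}\in D(A)$ and using only the continuity of each individual operator $T(t)$ (so that $\|T(t)x_{n}\|\to\|T(t)y\|$ in probability), not any uniform bound --- hence the specific circularity you describe does not arise in that form --- and then applies the resonance theorem on the complete module $S$. Your instinct that this extension is the delicate point is nevertheless sound: in the paper's limit passage the constant $\xi$ in fact depends on $x$, so the inequality $\|T(t)x_{n}\|\leq\xi t+\|x_{n}\|$ really involves $\xi_{x_{n}}$, and no uniform control of $\xi_{x_{n}}$ over $n$ is supplied; pointwise a.s.\ boundedness on a dense submodule does not in general transfer to the closure. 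In short, you correctly located the hard step but replaced the paper's one-line density argument with a harder program that you do not carry out, so the proposal as written is incomplete.
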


\begin{proof}
$``\Rightarrow"$ It is obvious from Lemma 3.7.\\
$``\Leftarrow"$ Let $$\xi=\bigvee\{\|\frac{T(t_{1})x-T(t_{2})x}{t_{1}-t_{2}}\|~|~t_{1},~t_{2}\in[0,r]~\textrm{and}~t_{1}>t_{2}\}$$
for any $x\in D(A)$ and any finite interval [0,r], then $\xi\in L^{0}_{+}({\mathcal F})$ and
$$\|T(t_{1})x-T(t_{2})x\|\leq \xi \cdot |t_{1}-t_{2}|$$
for any $t_{1}, t_{2}\in [0,r]$, i.e.,
$\|T(t)x-x\|\leq \xi \cdot t$
for any $t\in [0,r].$ Therefore,
$$\|T(t)x\|\leq\|T(t)x-x\|+\|x\|
\leq \xi \cdot t+\|x\|$$
for any $t\in [0,r],$ i.e., the set $\{T(t)x,t\in [0,r]\}$ is a.s. bounded in $S$ for any $x\in D(A)$. Since $D(A)$ is dense in $S$, it follows that for any $y\in S$, there exists a sequence $\{x_{n}, n\in N\}\subset D(A)$ such that $x_{n}\rightarrow y$ as $n\rightarrow \infty$. Observing that
$$\|T(t)x_{n}\|\leq \xi \cdot t+\|x_{n}\|$$for any $t\in [0,r],$
letting $n\rightarrow \infty$ in the above inequality, we have
$\|T(t)y\|\leq \xi \cdot t+\|y\|$ for any $t\in [0,r],$ which shows that the set $\{T(t)y,t\in [0,r]\}$ is a.s. bounded in $S$ for any $y\in S$. Since $S$ is complete, it follows from Proposition 3.10 that
$\{T(t),t\in [0,r]\}$ is a.s. bounded in $B(S)$, i.e.,
$\bigvee_{t\in[0,r]}\|T(t)\|\in L^{0}_{+}({\mathcal F}),$
which completes the proof of Theorem 3.11.
\end{proof}

The following example shows that it is necessary to require the $L^{0}-$Lipschitz assumption in Proposition 2.7, one can also refer to \cite{Thang}.

\begin{example}
Let $\Omega=[0,1],~\mathcal
{F}=\mathfrak{B}[0,1],~P=m$, where $\mathfrak{B}[0,1]$ denotes the
Borel $\sigma-$algebra on $[0,1]$ and $m$ the Lesbegue measure.
Define a mapping $f^{0}:[0,1]\rightarrow \mathcal {L}^{0}(\mathcal
{F},R)$ by $f^{0}(t)(\omega)= I_{(t,1]}(\omega)$ for any $t\in[0,1]$ and $\omega\in \Omega$.
For any fixed $t\in[0,1]$, let $f(t)$ denote the
equivalence class determined by $f^{0}(t)$. Then $f$ is differentiable
on $[0,1]$ and $f^{'}(t)=0,~~~\forall t\in [0,1].$
\end{example}

\begin{proof} Since $|f^{0}(t)(\omega)-f^{0}(t_{0})(\omega)|=$\[
|I_{(t,1]}(\omega)-
I_{(t_{0},1]}(\omega)|=I_{(t_{0}\wedge t,~t_{0}\vee t]}(\omega)= \left\{
\begin{array}
    {l@{\qquad}l}
     1  &~ t_{0}\wedge t<\omega\leq t_{0}\vee t,\\
    0 &  ~otherwise,

\end{array}
\right.
\]for any $t,~t_{0}\in [0,1]$.
Thus for any $\varepsilon >0$ and $t\neq t_{0}$,
\begin{align}
P(|\frac{f^{0}(t)(\omega)-f^{0}(t_{0})(\omega)}{t-t_{0}}|>\varepsilon)&\leq P(|f^{0}(t)(\omega)-f^{0}(t_{0})(\omega)|=1)\nonumber\\
&=  P(\omega~|~t_{0}\wedge t<\omega\leq t_{0}\vee t)\nonumber\\
&\leq |t-t_{0}|,\nonumber
\end{align}
which shows that $(f^{0})'(t_{0})=0$, i.e., $f'(t_{0})=0,$ thus $f'(t)=0,~\forall t\in [0,1]$. Further we have $\int_{0}^{1}f^{'}(t)dt=0$, but $f(1)-f(0)=-1.$
\end{proof}

\begin{remark}
Theorem 3.11 and Example 3.12 show that it is necessary to require the almost sure boundedness for such a
$C_{0}$--semigroup on $S$ in Theorem 3.9. In fact, if the $C_{0}$--semigroup on $S$ in Theorem 3.9 is not a.s. bounded, then, according to Theorem 3.11, $f(t)$ is not $L^{0}-$Lipschitz on some finite interval $[a,b]$. Thus, according to Example 3.12, the fundamental theorem of calculus may not hold, that is to say, the part (b) of Theorem 3.9 may not hold, which implies that it is necessary to require the almost sure boundedness for such a $C_{0}$--semigroup in Theorem 3.9.
\end{remark}

\section{The Hille-Yosida generation theorems on complete random normed modules}\label{section4}
The central result of this section Theorem 4.7, which is
called the Hille-Yosida generation theorem for an a.s. bounded $C_{0}$--semigroup,
is devoted to establishing three equivalent conditions for a module homomorphism to generate an a.s. bounded $C_{0}$--semigroup. For the sake of clearness, the proof of Theorem 4.7 is divided into three lemmas, i.e., Lemmas 4.8, 4.9 and 4.11. In particular, in the proof of Lemma 4.8, we are forced to deal with a difficult point with respect to the $L^{0}$-norm transformation technique on a complete $RN$ module. Before presenting them, let us give some preliminaries for the reader's convenience.

Let $A : D(A)\subset S\rightarrow S$ be a module homomorphism and
$$\rho(A)=\{\xi\in L^{0}({\mathcal F},K) : \xi I-A~\textrm{is~a~bijective~and~}{(\xi I-A)}^{-1}\in B(S)\},$$
then $\rho(A)$ is called the resolvent set of $A$. Further, if $\xi \in \rho(A)$, then $R(\xi,A)=(\xi I-A)^{-1}$ is called the resolvent of A.
It is easy to check that $R(\xi,A)$ is exactly the following mapping defined by
$R(\xi,A)x=\int_{0}^{+\infty}e^{-\xi s}T(s)xds$
for any $x\in S$ and $\xi\in L^{0}({\mathcal F},C)$ with $Re(\xi)\in L_{++}^{0}(\mathcal F)$, one can also see \cite{Thang} for details.

\begin{proposition}\rm(see \cite{Thang})
Let $A:D(A)\subset S\rightarrow S$ be a module homomorphism. Then $(A,D(A))$ is the infinitesimal generator of a contraction $C_{0}$--semigroup if and only if

(i) $A$ is closed and $D(A)$ is dense in $S$.

(ii) The resolvent set $\rho(A)$ contains the set $\{\xi\in L^{0}({\mathcal F},C)~|~Re\xi\in L_{++}^{0}(\mathcal F)\}$ and for such a $\xi$,
$\|R(\xi,A)\|\leq\frac{1}{Re\xi}.$
\end{proposition}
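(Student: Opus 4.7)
My approach is a faithful adaptation of the classical Hille--Yosida proof for contraction $C_{0}$--semigroups on Banach spaces to the $RN$ module setting, with scalars replaced by elements of $L^{0}(\mathcal F,C)$, the scalar norm replaced by the $L^{0}$--norm $\|\cdot\|$, and the Banach--space fundamental theorem of calculus replaced by its $L^{0}$--Lipschitz analogue (Proposition 2.7) together with Theorem 3.9. The argument splits into the two implications.

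\emph{Necessity.} Assume $(A,D(A))$ generates a contraction $C_{0}$--semigroup $\{T(t):t\geq 0\}$. Closedness of $A$ will be extracted from Theorem 3.9(b): if $x_{n}\in D(A)$ with $x_{n}\to x$ and $Ax_{n}\to y$, then $T(t)x_{n}-x_{n}=\int_{0}^{t}T(s)Ax_{n}\,ds$ passes to the limit via the contraction bound $\|T(s)z\|\leq\|z\|$, yielding $T(t)x-x=\int_{0}^{t}T(s)y\,ds$; dividing by $t$ and letting $t\downarrow 0$ gives $x\in D(A)$ and $Ax=y$. Density of $D(A)$ is the standard argument that $\frac{1}{h}\int_{0}^{h}T(s)x\,ds\in D(A)$ and converges to $x$ as $h\downarrow 0$. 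For $\xi\in L^{0}(\mathcal F,C)$ with $\mathrm{Re}\,\xi\in L^{0}_{++}(\mathcal F)$, I would define
$$R(\xi,A)x=\int_{0}^{+\infty}e^{-\xi s}T(s)x\,ds$$
and verify, using $|e^{-\xi s}|=e^{-(\mathrm{Re}\,\xi)s}$ together with the contraction bound, that this improper Riemann integral converges in the $(\varepsilon,\lambda)$--topology and satisfies $\|R(\xi,A)x\|\leq\frac{1}{\mathrm{Re}\,\xi}\|x\|$. The identities $(\xi I-A)R(\xi,A)x=x$ on $S$ and $R(\xi,A)(\xi I-A)x=x$ on $D(A)$ will then follow from the usual integration-by-parts argument based on Theorem 3.9.

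\emph{Sufficiency.} Given (i) and (ii), I would form the Yosida approximation $A_{n}:=nAR(n,A)=n^{2}R(n,A)-nI\in B(S)$ for $n\in N$, identifying $n$ with the constant $n\cdot 1\in L^{0}(\mathcal F,C)$. From $\|R(n,A)\|\leq 1/n$, the resolvent identity, and density of $D(A)$, one derives $nR(n,A)x\to x$ for all $x\in S$ and $A_{n}x\to Ax$ for $x\in D(A)$. Next, define $T_{n}(t):=e^{tA_{n}}$ by the $L^{0}$--norm convergent power series and factorise
$$T_{n}(t)=e^{-nt}\exp\bigl(tn^{2}R(n,A)\bigr),$$
so that $\|T_{n}(t)\|\leq e^{-nt}\cdot e^{nt}=1$, i.e., each $T_{n}(t)$ is a contraction. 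Since $A_{n}$ and $A_{m}$ commute and commute with all $T_{n}(s),T_{m}(s)$, differentiating $s\mapsto T_{n}(t-s)T_{m}(s)x$ yields
$$\|T_{n}(t)x-T_{m}(t)x\|\leq t\,\|A_{n}x-A_{m}x\|,$$
so $\{T_{n}(t)x\}$ is Cauchy on $D(A)$, hence on $S$ by density, uniformly in $t$ on each finite interval. Set $T(t)x:=\lim_{n}T_{n}(t)x$; the semigroup law, strong continuity and contraction property are routine, and passing to the limit in $T_{n}(t)x-x=\int_{0}^{t}T_{n}(s)A_{n}x\,ds$ identifies the generator of $T$ as an extension of $A$. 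Bijectivity of $I-A$ (by (ii)) and of $I$ minus the generator of $T$ (by the necessity direction just proved) forces the two to coincide.

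\emph{Main obstacle.} The delicate point is the $L^{0}$--norm bookkeeping inside the Yosida scheme. Even though each $A_{n}\in B(S)$, the bound $\|\exp(tn^{2}R(n,A))\|\leq e^{nt}$ requires controlling the $L^{0}_{+}(\mathcal F)$--valued series $\sum_{k}\frac{(tn^{2})^{k}}{k!}\|R(n,A)\|^{k}$ using $\|R(n,A)\|\leq 1/n$ almost surely, which is a nontrivial manipulation in the partially ordered ring $L^{0}(\mathcal F,R)$ rather than in $R$. In parallel, convergence of the improper integral $\int_{0}^{+\infty}e^{-\xi s}T(s)x\,ds$ in the $(\varepsilon,\lambda)$--topology demands a Cauchy criterion for its partial integrals as the upper limit tends to infinity, using the $L^{0}$--integrability of $e^{-(\mathrm{Re}\,\xi)s}$ against $\|T(s)x\|\leq\|x\|$. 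These are precisely the $L^{0}$--norm transformation manipulations alluded to in the excerpt.
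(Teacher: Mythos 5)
The paper does not prove this proposition; it is quoted as a known result from \cite{Thang} and used as a black box (restated as Theorem 4.4 and then upgraded via the renorming Lemma 4.8). Your proposal is the standard Hille--Yosida argument (Laplace-transform representation of the resolvent for necessity, Yosida approximants $A_{n}=n^{2}R(n,A)-nI$ with the factorisation $T_{n}(t)=e^{-nt}\exp(tn^{2}R(n,A))$ for sufficiency), correctly transplanted to the $RN$ module setting, and it identifies the genuinely delicate points -- the $L^{0}$-Lipschitz hypothesis needed to invoke the fundamental theorem of calculus (Proposition 2.7), convergence of the improper integral in the $(\varepsilon,\lambda)$-topology, and the fact that the resolvent condition must be verified for non-constant $\xi\in L^{0}(\mathcal F,C)$, which uses that $T(h)$ commutes with the $L^{0}$-scalar $e^{-\xi s}$ precisely because it is a module homomorphism. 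This matches the route taken in the cited source, so there is nothing to correct beyond noting that your text is an outline rather than a fully written-out proof.
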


\begin{proposition}(see \cite{Thang})
Let $\{T(t):t\geq0\}$ and $\{S(t):t\geq0\}$ be two a.s. bounded $C_{0}$--semigroup with infinitesimal generators $(A,D(A))$ and $(B,D(B))$ respectively on $S$.
If $A=B$, then $T(t)=S(t)$ for any $t\geq0$.
\end{proposition}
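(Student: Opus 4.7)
The plan is to prove the classical identity principle for semigroups in the random setting: first show $T(t)x = S(t)x$ for every $x$ in the common domain $D(A) = D(B)$, then extend to all of $S$ by density together with the continuity of the module homomorphisms $T(t)$ and $S(t)$.

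Fix $t > 0$ and $x \in D(A) = D(B)$, and define $g : [0,t] \to S$ by $g(s) = T(t-s)S(s)x$. The key step is to verify that $g$ is continuously differentiable on $[0,t]$ with $g'(s) = 0$ for every $s \in [0,t]$. By Theorem 3.9 applied to $\{S(t)\}$, we have $S(s)x \in D(B) = D(A)$ and $\tfrac{d}{ds}S(s)x = BS(s)x = AS(s)x$. Moreover, for any $y \in D(A)$, Theorem 3.9 applied to $\{T(t)\}$ gives $\tfrac{d}{ds}T(t-s)y = -AT(t-s)y = -T(t-s)Ay$. I would now establish the product-type differentiation formula
$$g'(s) \;=\; -T(t-s)AS(s)x \;+\; T(t-s)BS(s)x$$
by writing the difference quotient as
$$\frac{g(s+h)-g(s)}{h} \;=\; T(t-s-h)\,\frac{S(s+h)x - S(s)x}{h} \;+\; \frac{T(t-s-h)-T(t-s)}{h}\,S(s)x,$$
and passing to the limit: the first summand converges to $T(t-s)BS(s)x$ using the strong continuity of $\{T(t)\}$ together with the a.s. boundedness of $\{T(r) : r \in [0,t]\}$ (to control $T(t-s-h)$ uniformly), while the second converges to $-AT(t-s)S(s)x = -T(t-s)AS(s)x$ by Theorem 3.9 since $S(s)x \in D(A)$. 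Because $A = B$ on $D(A) = D(B)$, the two terms cancel and $g'(s) = 0$.

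Next I would apply Proposition 2.7 (the fundamental theorem of calculus) to conclude $g(t) - g(0) = \int_0^t g'(s)\,ds = 0$, which yields $S(t)x = T(t)x$. To invoke Proposition 2.7 I must check that $g$ is $L^0$-Lipschitz on $[0,t]$; this follows in the same spirit as Lemma 3.7, using Proposition 3.8 to bound $\|T(t-s)\|$ and $\|S(s)\|$ by $Me^{\tau t}$ and combining this with the $L^0$-Lipschitz estimate provided by Lemma 3.7 for $s \mapsto S(s)x$ and $s \mapsto T(s)(S(s_0)x)$. Concretely, splitting
$$g(s_1)-g(s_2) = T(t-s_1)\bigl(S(s_1)x-S(s_2)x\bigr) + \bigl(T(t-s_1)-T(t-s_2)\bigr)S(s_2)x$$
and bounding each piece by an element of $L^{0}_{+}(\mathcal F)$ times $|s_1-s_2|$ does the job.

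Finally, for arbitrary $y \in S$, density of $D(A)$ furnishes a sequence $\{x_n\} \subset D(A)$ with $x_n \to y$ in the $(\varepsilon,\lambda)$-topology. Since $T(t), S(t) \in B(S)$ are continuous module homomorphisms, $T(t)x_n = S(t)x_n$ passes to the limit to give $T(t)y = S(t)y$, and this holds for every $t \geq 0$ (the case $t = 0$ being trivial since $T(0) = S(0) = I$). The main obstacle I expect is the rigorous justification of the product-rule differentiation of $g$: the interchange of limits must use the uniform a.s. bound on $\|T(r)\|$ over $r \in [0,t]$ provided by a.s. boundedness, so that $T(t-s-h)$ acting on a vector converging to $\tfrac{d}{ds}S(s)x$ indeed converges to $T(t-s)\tfrac{d}{ds}S(s)x$ in the $(\varepsilon,\lambda)$-topology; without a.s. boundedness this step would not go through, which is precisely why the hypothesis on the semigroups appears.
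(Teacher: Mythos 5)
Your proof is correct and is essentially the canonical uniqueness argument; the paper itself states this proposition without proof (it is quoted from \cite{Thang}), and the argument there is the same one you give: differentiate $s\mapsto T(t-s)S(s)x$ on $D(A)=D(B)$, obtain zero by cancellation of the generators, apply the fundamental theorem of calculus (Proposition 2.7) after checking the $L^{0}$-Lipschitz hypothesis, and extend by density and continuity of $T(t),S(t)$. The one spot that deserves a few extra lines is the $L^{0}$-Lipschitz bound for the term $(T(t-s_{1})-T(t-s_{2}))S(s_{2})x$, where the constant must be uniform in $s_{2}$; this follows from $(T(h)-I)y=\int_{0}^{h}T(u)Ay\,du$ together with $\|AS(s_{2})x\|=\|S(s_{2})Ax\|\leq Me^{\tau t}\|Ax\|$, so your sketch closes without difficulty.
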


\begin{remark}\rm
According to Proposition 4.2 and Theorem 3.9, one can obtain that the infinitesimal generator of an a.s. bounded $C_{0}$--semigroup is a  module homomorphism that determines the semigroup uniquely. Consequently, if $A:D(A)\subset S\rightarrow S$ is the infinitesimal generator of an a.s. bounded $C_{0}$--semigroup $\{T(t):t\geq0\}$, then we can also say the module homomorphism $(A, D(A))$ generates the $C_{0}$--semigroup $\{T(t):t\geq0\}$.
\end{remark}

Based on Proposition 4.1 and Remark 4.3, we can state the Hille-Yosida generation theorem for a contraction
$C_{0}$--semigroup as follows.

\begin{theorem}
For a module homomorphism $(A,D(A))$ on $S$, the following assertions are all equivalent.

(a) $(A,D(A))$ generates a contraction $C_{0}$--semigroup.

(b) $(A,D(A))$ is closed, densely defined, and for each $\xi\in L_{++}^{0}(\mathcal F)$ one has $\xi\in\rho(A)$ and $\|\xi R(\xi,A)\|\leq1.$

(c) $(A,D(A))$ is closed, densely defined, and for each $\xi\in L^{0}(\mathcal F,C)$ with $Re\xi\in L_{++}^{0}(\mathcal F)$ one has $\xi\in\rho(A)$ and
$\|R(\xi,A)\|\leq\frac{1}{Re\xi}.$
\end{theorem}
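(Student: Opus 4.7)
The plan is to reduce Theorem 4.4 to Proposition 4.1 together with a Yosida approximation argument. The equivalence (a) $\Leftrightarrow$ (c) is exactly Proposition 4.1, so it suffices to show that (b) is equivalent to either of them. The implication (c) $\Rightarrow$ (b) is immediate: for $\xi\in L^{0}_{++}(\mathcal F)$, viewing $\xi$ as an element of $L^{0}(\mathcal F,C)$ with $Re\xi=\xi\in L^{0}_{++}(\mathcal F)$, condition (c) yields $\xi\in\rho(A)$ and $\|\xi R(\xi,A)\|=\xi\|R(\xi,A)\|\leq \xi\cdot(1/\xi)=1$.

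The main task is therefore (b) $\Rightarrow$ (a), for which I plan to adapt the classical Yosida approximation to the complete $RN$-module setting. For each positive integer $n$, regarded as a constant element of $L^{0}_{++}(\mathcal F)$, condition (b) gives $n\in\rho(A)$ with $\|nR(n,A)\|\leq 1$, so the Yosida approximation $A_n:=n^{2}R(n,A)-nI=nAR(n,A)$ is an everywhere-defined continuous module homomorphism on $S$. The operator exponential series $T_n(t):=\sum_{k=0}^{\infty}(tA_n)^{k}/k!$ converges in the $L^{0}$-operator norm on $B(S)$, and the identity $T_n(t)=e^{-nt}\sum_{k=0}^{\infty}(tn^{2})^{k}R(n,A)^{k}/k!$ together with $\|R(n,A)\|\leq 1/n$ yields $\|T_n(t)\|\leq e^{-nt}e^{tn^{2}/n}=1$.

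It then remains to show that $T_n(t)x$ converges as $n\to\infty$, uniformly for $t$ in any finite real interval, for every $x\in S$, that the limit $T(t)$ defines a contraction $C_{0}$--semigroup on $S$, and that its infinitesimal generator coincides with $A$. The resolvent identity combined with $\|nR(n,A)\|\leq 1$ gives $nR(n,A)x\to x$ for $x\in D(A)$, hence for all $x\in S$ by density; consequently $A_n x\to Ax$ for $x\in D(A)$. Since resolvents at different points of $\rho(A)$ commute, so do $T_n(t)$ and $T_m(s)$, and applying Proposition 2.7 to the mapping $s\mapsto T_n(t-s)T_m(s)x$ (together with the uniform bound $\|T_n\|,\|T_m\|\leq 1$) yields $\|T_n(t)x-T_m(t)x\|\leq t\|A_n x-A_m x\|$ for $x\in D(A)$, making $\{T_n(t)x\}$ Cauchy uniformly on finite time intervals. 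Defining $T(t)x:=\lim_n T_n(t)x$ and extending by density using the uniform contraction bound, the semigroup, strong continuity, and contraction properties follow routinely; the closedness of $A$, which is part of hypothesis (b), is then invoked to ensure that the generator of $T(t)$ is precisely $A$ rather than a proper extension.

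The main obstacle is executing this Yosida program faithfully in the $L^{0}$-module framework, where scalar norms must be replaced by $L^{0}_{+}$-valued norms and every limit justified in the $(\varepsilon,\lambda)$-topology. In particular, the convergence of the operator exponential in $B(S)$, the $L^{0}$-valued estimate $\|T_n(t)\|\leq 1$, and the verification of the $L^{0}$-Lipschitz hypothesis needed to invoke Proposition 2.7 all require care. Once (b) $\Rightarrow$ (a) is established, the chain (b) $\Rightarrow$ (a) $\Leftrightarrow$ (c) $\Rightarrow$ (b) closes the equivalence and completes the proof.
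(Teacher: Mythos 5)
Your proposal is correct in outline, but it is worth noting that the paper itself offers no proof of this theorem at all: Theorem~4.4 is simply ``stated'' as a repackaging of Proposition~4.1 (quoted from Thang et al.), which is literally the equivalence (a)~$\Leftrightarrow$~(c), and the paper never addresses condition (b). You therefore do strictly more work than the paper: you take (a)~$\Leftrightarrow$~(c) from Proposition~4.1, observe that (c)~$\Rightarrow$~(b) is trivial by restricting to real $\xi$, and then supply the genuinely nontrivial direction (b)~$\Rightarrow$~(a) by transplanting the classical Yosida approximation to the $RN$-module setting. All the key steps of that program are sound here: $\|A_n\|\le 2n$ gives norm-convergence of the exponential series in the complete $RN$ module $B(S)$; the factorization $T_n(t)=e^{-nt}e^{tn^2R(n,A)}$ with $\|R(n,A)\|\le 1/n$ gives the contraction bound; $nR(n,A)x-x=R(n,A)Ax$ gives $\|nR(n,A)x-x\|\le\frac1n\|Ax\|\to 0$ on $D(A)$ and hence everywhere by density; and the map $s\mapsto T_m(t-s)T_n(s)x$ is continuously differentiable with derivative bounded in $L^0$-norm by $\|A_nx\|+\|A_mx\|$, so Proposition~2.7 legitimately yields the Cauchy estimate $\|T_n(t)x-T_m(t)x\|\le t\|A_nx-A_mx\|$. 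The one imprecision is at the very end: to rule out the generator $B$ of the limit semigroup being a \emph{proper} extension of $A$, the standard argument does not invoke closedness of $A$ directly but rather the fact that $1\in\rho(A)$ (part of hypothesis (b)), so that $(I-B)D(A)=(I-A)D(A)=S=(I-B)D(B)$ together with injectivity of $I-B$ forces $D(B)=D(A)$. With that correction your route is complete and, unlike the paper's, self-contained; what the paper's approach ``buys'' is only brevity, at the cost of leaving the equivalence with (b) unjustified.
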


If a $C_{0}$--semigroup $\{T(t):t\geq0\}$ with infinitesimal generator $A$ satisfies, for some $\tau\in L^{0}(\mathcal F,R)$,
$\|T(t)\|\leq e^{\tau t}$ for any $t\geq0$, then we can apply the above characterization to the rescaled contraction semigroup given by
$S(t)=e^{-\tau t}T(t)$
for any $t\geq0$. Since it is clear that the infinitesimal generator of $\{S(t):t\geq0\}$ is $A-\tau I$, Theorem 4.4 takes the following form.

\begin{corollary}\rm
Suppose that $\tau\in L^{0}(\mathcal F,R)$. For a module homomorphism $(A,D(A))$ on $S$, the following assertions are equivalent.

(a) $(A,D(A))$ generates an a.s. bounded $C_{0}$--semigroup $\{T(t):t\geq0\}$ satisfying
$\|T(t)\|\leq e^{\tau t}$
for any $t\geq0$.

(b) $(A,D(A))$ is closed, densely defined, and for each $\xi\in L^{0}(\mathcal F,R)$ with $(\xi-\tau)\in L_{++}^{0}(\mathcal F)$ one has $\xi\in \rho(A)$ and
$\|(\xi-\tau)R(\xi,A)\|\leq1.$

(c) $(A,D(A))$ is closed, densely defined, and for each $\xi\in L^{0}(\mathcal F,C)$ with $(Re\xi-\tau)\in L_{++}^{0}(\mathcal F)$ one has $\xi\in \rho(A)$ and
$\|R(\xi,A)\|\leq\frac{1}{Re\xi-\tau}.$
\end{corollary}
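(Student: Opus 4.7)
The plan is to reduce Corollary 4.5 to Theorem 4.4 by the standard rescaling already foreshadowed in the paragraph preceding the statement: given a candidate semigroup $\{T(t):t\geq 0\}$, I will set $S(t):=e^{-\tau t}T(t)$ for each $t\geq 0$, where $\tau\in L^{0}(\mathcal F,R)$. First I will verify that $\{S(t):t\geq 0\}$ is again a $C_{0}$-semigroup and that its infinitesimal generator is $(A-\tau I, D(A))$. The semigroup law $S(s)S(t)=S(s+t)$ uses that the $L^{0}(\mathcal F,K)$-scalar $e^{-\tau t}$ commutes with every continuous module homomorphism (which is exactly the module property), and strong continuity at $0$ follows from $e^{-\tau t}\to 1$ in $L^{0}$ together with the continuity of scalar multiplication in the $(\varepsilon,\lambda)$-topology.

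To identify the generator, for $x\in D(A)$ I will split
\[\frac{S(t)x-x}{t}=e^{-\tau t}\cdot\frac{T(t)x-x}{t}+\frac{e^{-\tau t}-1}{t}\cdot x\]
and pass to the limit as $t\downarrow 0$ using the elementary pointwise fact $(e^{-\tau t}-1)/t\to -\tau$ in $L^{0}$, obtaining $(A-\tau I)x$. The reverse inclusion $D(A_{S})\subseteq D(A)$ is symmetric via $T(t)=e^{\tau t}S(t)$. The identity $\|S(t)\|=e^{-\tau t}\|T(t)\|$, valid because $e^{-\tau t}\in L_{+}^{0}(\mathcal F)$ is a module scalar, then shows that $\|T(t)\|\leq e^{\tau t}$ for all $t\geq 0$ is equivalent to $\{S(t)\}$ being a contraction semigroup in the sense of Theorem 4.4.

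The properties \emph{closed} and \emph{densely defined} for $A$ and for $A-\tau I$ coincide, since $\tau I$ is a continuous module homomorphism on $S$ with domain $S$. The resolvent translation
\[R(\mu,\,A-\tau I)=(\mu I-(A-\tau I))^{-1}=((\mu+\tau)I-A)^{-1}=R(\mu+\tau,\,A)\]
converts resolvent data of $A-\tau I$ at $\mu$ into resolvent data of $A$ at $\xi:=\mu+\tau$. Under this substitution, $\mu\in L_{++}^{0}(\mathcal F)$ becomes $\xi-\tau\in L_{++}^{0}(\mathcal F)$ (and analogously with real parts in the complex case), while the contraction bounds $\|\mu R(\mu,A-\tau I)\|\leq 1$ and $\|R(\mu,A-\tau I)\|\leq 1/Re\mu$ become precisely $\|(\xi-\tau)R(\xi,A)\|\leq 1$ and $\|R(\xi,A)\|\leq 1/(Re\xi-\tau)$, which are the bounds appearing in (b) and (c).

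With these three translations in hand, applying Theorem 4.4 to the module homomorphism $(A-\tau I,\,D(A))$ converts its (a), (b), (c) verbatim into (a), (b), (c) of Corollary 4.5, closing the argument. I do not anticipate a serious obstacle: the only delicate point is confirming that the $L^{0}(\mathcal F,R)$-valued scalar $\tau$ behaves throughout exactly as a deterministic constant would in the classical Banach space proof. This is underwritten by the $L^{0}(\mathcal F,K)$-module structure of $S$ and by the continuity of $(\varepsilon,\lambda)$-topology operations, so the essential work is careful bookkeeping — in particular tracking the commutation of $e^{\pm\tau t}$ through $T(t)$ and through the resolvent — rather than new analysis.
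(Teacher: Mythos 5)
Your proposal is correct and takes essentially the same route as the paper: the paper obtains Corollary 4.5 from Theorem 4.4 by exactly the rescaling $S(t)=e^{-\tau t}T(t)$ with generator $A-\tau I$, asserting as ``clear'' the verifications (semigroup law, strong continuity, generator identification, resolvent shift $R(\mu,A-\tau I)=R(\mu+\tau,A)$) that you carry out explicitly. The only point you leave implicit is that the bound $\|T(t)\|\leq e^{\tau t}$ automatically yields the a.s.\ boundedness required in (a), which is immediate since $\bigvee_{t\in[0,L]}\|T(t)\|\leq e^{|\tau|L}\in L^{0}_{+}(\mathcal F)$.
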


\begin{remark}\label{rem3.1}
It should be pointed out that it is necessary to require the almost sure boundedness in Corollary 4.5, which is quite different from the classical case.
\end{remark}

The main result of this section is the following Theorem 4.7.

\begin{theorem}Let $(A,D(A))$ be a module homomorphism on $S$. Suppose that $\tau,M\in L^{0}(\mathcal F,R)$ and $M\geq1$. Then the following assertions are equivalent.

(a) $(A,D(A))$ generates an a.s. bounded $C_{0}$--semigroup $\{T(t):t\geq0\}$ satisfying
$\|T(t)\|\leq Me^{\tau t}$
for any $t\geq0$.

(b) $(A,D(A))$ is closed, densely defined, and for each $\xi\in L^{0}(\mathcal F,R)$ with $(\xi-\tau)\in L_{++}^{0}(\mathcal F)$ one has $\xi\in \rho(A)$ and
$\|[(\xi-\tau)R(\xi,A)]^{n}\|\leq M$
for any $n\in N$.

(c) $(A,D(A))$ is closed, densely defined, and for each $\xi\in L^{0}(\mathcal F,C)$ with $(Re\xi-\tau)\in L_{++}^{0}(\mathcal F)$ one has $\xi\in \rho(A)$ and
$$\|R(\xi,A)^{n}\|\leq\frac{M}{(Re\xi-\tau)^{n}}$$
for any $n\in N$.
\end{theorem}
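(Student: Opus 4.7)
The plan is to prove the equivalences via the chain $(a)\Rightarrow(c)\Rightarrow(b)\Rightarrow(a)$, corresponding to the three lemmas announced in the excerpt. The direction $(c)\Rightarrow(b)$ (Lemma 4.11) is a direct specialisation: given $\xi\in L^{0}(\mathcal F,R)$ with $\xi-\tau\in L^{0}_{++}(\mathcal F)$, such $\xi$ is also admissible in (c), so
\[
\|[(\xi-\tau)R(\xi,A)]^{n}\|=(\xi-\tau)^{n}\|R(\xi,A)^{n}\|\leq M,
\]
while the closedness of $A$ and density of $D(A)$ transfer verbatim.

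For $(a)\Rightarrow(c)$ (Lemma 4.9), I would differentiate the integral representation $R(\xi,A)x=\int_{0}^{+\infty}e^{-\xi s}T(s)x\,ds$ recalled before Proposition 4.1 a total of $n-1$ times with respect to $\xi$. The exchange of differentiation and integration on each truncated interval is justified by Proposition 2.7 (the fundamental theorem of calculus for $L^{0}$-Lipschitz maps into $S$), and the exponential bound $\|T(s)\|\leq Me^{\tau s}$ from (a) supplies the absolute convergence needed to pass to $[0,+\infty)$. This yields
\[
R(\xi,A)^{n}x=\frac{1}{(n-1)!}\int_{0}^{+\infty}s^{n-1}e^{-\xi s}T(s)x\,ds,
\]
and majorising the integrand by $Ms^{n-1}e^{-(Re\xi-\tau)s}\|x\|$ together with the classical $\Gamma$-integral evaluation produces the desired estimate $\|R(\xi,A)^{n}x\|\leq M\|x\|/(Re\xi-\tau)^{n}$.

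The main work is $(b)\Rightarrow(a)$ (Lemma 4.8). Since (b) only bounds $\|[(\xi-\tau)R(\xi,A)]^{n}\|$ by $M\geq 1$ rather than by $1$, Corollary 4.5 cannot be invoked directly. The plan is to renormalise $(S,\|\cdot\|)$ into an equivalent $L^{0}$-norm in which $A-\tau I$ satisfies the contraction hypothesis of Corollary 4.5(b). Replacing $A$ by $A-\tau I$ (which sends $R(\xi,A-\tau I)=R(\xi+\tau,A)$) reduces to the case $\tau=0$, and for each $\mu\in L^{0}_{++}(\mathcal F)$ I would set
\[
|x|_{\mu}=\bigvee_{k\geq 0}\|[\mu R(\mu,A)]^{k}x\|,\qquad |||x|||=\bigvee_{\mu}|x|_{\mu},
\]
both of which lie in $L^{0}_{+}(\mathcal F)$ because (b) forces $|x|_{\mu}\leq M\|x\|$ pathwise. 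The resulting sandwich $\|x\|\leq|||x|||\leq M\|x\|$ makes $|||\cdot|||$ an equivalent $L^{0}$-norm inducing the same $(\varepsilon,\lambda)$-topology, so continuous module homomorphisms, the resolvent set and the density of $D(A)$ are unchanged. The crucial contraction inequality $|||\mu R(\mu,A)x|||\leq|||x|||$ for every admissible $\mu$ follows from the fact that distinct resolvents $R(\mu,A)$ and $R(\nu,A)$ commute by the resolvent identity, so $[\nu R(\nu,A)]^{k}[\mu R(\mu,A)]$ can be rearranged as $[\mu R(\mu,A)][\nu R(\nu,A)]^{k}$ and then majorised by $|x|_{\nu}\leq|||x|||$. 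Corollary 4.5 applied in $(S,|||\cdot|||)$ produces a contraction $C_{0}$-semigroup $\widetilde T(t)$, and the rescaling $T(t)=e^{\tau t}\widetilde T(t)$ returns the required bound $\|T(t)\|\leq Me^{\tau t}$.

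The principal obstacle, which the authors single out as the ``$L^{0}$-norm transformation technique'', is making the construction of $|||\cdot|||$ rigorous. The suprema defining $|x|_{\mu}$ and $|||x|||$ run over the uncountable set $L^{0}_{++}(\mathcal F)$, so Proposition 2.1(1) must be invoked to realise them as countable joins, after which one must verify that $L^{0}$-homogeneity and subadditivity in $x$ survive these suprema. Verifying the contraction inequality $|||\mu R(\mu,A)x|||\leq|||x|||$ also demands care: the rearrangement sketched above has to be carried out at the level of $L^{0}_{+}(\mathcal F)$-valued joins rather than scalar suprema, and the commutativity of resolvents (hence of their countably-many iterates) must be combined with the uniform bound $M\|x\|$ to close the estimate on a set of full probability.
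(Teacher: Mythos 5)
Your overall architecture ($(c)\Rightarrow(b)$ trivially, $(a)\Rightarrow(c)$ via the Laplace representation, $(b)\Rightarrow(a)$ via an equivalent $L^{0}$-norm) is reasonable, and your renormalisation $|x|_{\mu}=\bigvee_{k\geq0}\|[\mu R(\mu,A)]^{k}x\|$, $|||x|||=\bigvee_{\mu}|x|_{\mu}$ is exactly the construction of the paper's Lemma 4.8. But your justification of the crucial contraction inequality $|||\mu R(\mu,A)x|||\leq|||x|||$ does not close. After commuting, the generic term is $\|\mu R(\mu,A)[\nu R(\nu,A)]^{k}x\|$, and hypothesis (b) only gives $\|\mu R(\mu,A)\|\leq M$, not $\leq1$; so the majorisation you describe yields $|||\mu R(\mu,A)x|||\leq M\,|||x|||$, which is useless when $M>1$ (and when $M=1$ no renormalisation is needed). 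Commutativity of resolvents cannot convert the mixed product $[\nu R(\nu,A)]^{k}[\mu R(\mu,A)]$ into a pure power of a single resolvent, and only pure powers are controlled by (b). The paper instead uses the resolvent equation: writing $y=R(\xi,A)x=R(\eta,A)(x+(\eta-\xi)y)$ for $\xi\leq\eta$, one gets $\|y\|_{\eta}\leq\frac{1}{\eta}\|x\|_{\eta}+(1-\frac{\xi}{\eta})\|y\|_{\eta}$, hence $\|\xi R(\xi,A)x\|_{\eta}\leq\|x\|_{\eta}$; this yields the monotonicity $\|x\|_{\xi}\leq\|x\|_{\eta}$ for $\xi\leq\eta$, and then for arbitrary $\eta$ one estimates $\|\xi R(\xi,A)x\|_{\eta}\leq\|\xi R(\xi,A)x\|_{\eta\vee\xi}\leq\|x\|_{\eta\vee\xi}\leq|x|$. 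This step is the actual content of the ``$L^{0}$-norm transformation technique'' and is missing from your argument.

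A secondary gap is in $(a)\Rightarrow(c)$: you propose to differentiate $R(\xi,A)x=\int_{0}^{+\infty}e^{-\xi s}T(s)x\,ds$ with respect to $\xi$, but here $\xi$ ranges over $L^{0}(\mathcal F,C)$, and differentiation with respect to an $L^{0}$-valued parameter is not defined in this framework (Proposition 2.7 concerns functions of a real variable only). The paper avoids this entirely: it sets $|x|=\bigvee_{t\geq0}\|\widetilde T(t)x\|$ for the rescaled semigroup $\widetilde T(t)=e^{-\tau t}T(t)$, observes that $\widetilde T$ is a contraction semigroup for $|\cdot|$, applies the contraction Hille--Yosida theorem there to get $|R(\xi,A-\tau I)^{n}x|\leq(Re\,\xi)^{-n}|x|$, and sandwiches back with $\|x\|\leq|x|\leq M\|x\|$. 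If you prefer the integral route, you should instead establish $R(\xi,A)^{n}x=\frac{1}{(n-1)!}\int_{0}^{+\infty}s^{n-1}e^{-\xi s}T(s)x\,ds$ by induction on $n$ using the semigroup property and a Fubini-type interchange, rather than by differentiation in $\xi$.
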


The proof of Theorem 4.7 needs the following three lemmas.

\begin{lemma}
Let $A:D(A)\rightarrow S$ be a module homomorphism for which $L^{0}_{++}({\mathcal F})\subset\rho(A)$. If there exists an $M\in L^{0}_{+}({\mathcal F})$ with $M \geq 1$ such that
\begin{align}
\|\xi^{n}R(\xi,A)^{n}\|\leq M
\end{align}
for any $n\in N$ and $\xi\in L^{0}_{++}({\mathcal F})$, then there exists an $L^{0}$-norm $|\cdot|$ on $S$ which is equivalent to the original $L^{0}$-norm $\|\cdot\|$ on $S$ and satisfies
\begin{align}
\|x\|\leq|x|\leq M\|x\|
\end{align}
for any $x\in S$ and
\begin{align}
|\xi R(\xi,A)x|\leq|x|
\end{align}
for any $x\in S$ and $\xi\in L^{0}_{++}({\mathcal F})$.
\end{lemma}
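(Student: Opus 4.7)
The plan is to construct the equivalent $L^{0}$-norm $|\cdot|$ in two stages via an auxiliary family of norms indexed by $L^{0}_{++}(\mathcal F)$. For each $\mu\in L^{0}_{++}(\mathcal F)$, I would first define
$$|x|_{\mu}:=\bigvee_{n\geq 0}\|\mu^{n}R(\mu,A)^{n}x\|.$$
By hypothesis (4.1) one has $|x|_{\mu}\leq M\|x\|$, and the $n=0$ term gives $\|x\|\leq|x|_{\mu}$, so $|\cdot|_{\mu}$ takes values in $L^{0}_{+}(\mathcal F)$ and is equivalent to $\|\cdot\|$. The $L^{0}$-norm axioms (homogeneity, triangle inequality, positive definiteness) descend routinely from those of $\|\cdot\|$ together with the elementary behaviour of $\bigvee$ in $L^{0}_{+}(\mathcal F)$. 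A direct index shift yields $|\mu R(\mu,A)x|_{\mu}\leq|x|_{\mu}$ and, by iteration, $|(\mu R(\mu,A))^{k}x|_{\mu}\leq|x|_{\mu}$ for every $k\geq 1$.

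The crux is to prove monotonicity: for $\mu\leq\lambda$ in $L^{0}_{++}(\mathcal F)$, $|x|_{\mu}\leq|x|_{\lambda}$. I would use the resolvent identity $R(\mu,A)-R(\lambda,A)=(\lambda-\mu)R(\mu,A)R(\lambda,A)$, together with $\alpha:=\mu/\lambda\in L^{0}_{+}(\mathcal F)$ satisfying $0<\alpha\leq 1$, to derive the algebraic identity
$$\mu R(\mu,A)=\alpha\cdot\lambda R(\lambda,A)+(1-\alpha)\cdot\mu R(\mu,A)\cdot\lambda R(\lambda,A).$$
Since the resolvents commute, iterating leads to the Neumann-type expansion
$$\mu R(\mu,A)x=\alpha\sum_{k=0}^{\infty}(1-\alpha)^{k}(\lambda R(\lambda,A))^{k+1}x,$$
whose convergence in the $(\varepsilon,\lambda)$-topology is controlled by $\|(1-\alpha)^{k}(\lambda R(\lambda,A))^{k+1}x\|\leq(1-\alpha)^{k}M\|x\|$ with $0\leq 1-\alpha<1$ a.s. Applying $|\cdot|_{\lambda}$ termwise, using the $L^{0}$-homogeneity $|\alpha z|_{\lambda}=|\alpha|\,|z|_{\lambda}$, the triangle inequality, and the Step 1 invariance $|(\lambda R(\lambda,A))^{k+1}x|_{\lambda}\leq|x|_{\lambda}$, I would conclude $|\mu R(\mu,A)x|_{\lambda}\leq\alpha\sum_{k\geq 0}(1-\alpha)^{k}|x|_{\lambda}=|x|_{\lambda}$. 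Iterating $n$ times gives $\|(\mu R(\mu,A))^{n}x\|\leq|(\mu R(\mu,A))^{n}x|_{\lambda}\leq|x|_{\lambda}$ for every $n$, whence $|x|_{\mu}\leq|x|_{\lambda}$.

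Finally, define $|x|:=\bigvee\{|x|_{\mu}:\mu\in L^{0}_{++}(\mathcal F)\}$. Being dominated by $M\|x\|$, this supremum lies in $L^{0}_{+}(\mathcal F)$, and the bound $\|x\|\leq|x|_{\mu}$ gives $\|x\|\leq|x|$, establishing (4.2). Homogeneity passes through the sup because $|\alpha x|_{\mu}=|\alpha|\,|x|_{\mu}$, and the triangle inequality follows from $|x+y|_{\mu}\leq|x|_{\mu}+|y|_{\mu}\leq|x|+|y|$ holding for each $\mu$. For (4.3), given $\xi\in L^{0}_{++}(\mathcal F)$ and any $\mu\in L^{0}_{++}(\mathcal F)$, set $\nu:=\xi\vee\mu$; applying the Step 2 monotonicity to the pairs $(\mu,\nu)$ and $(\xi,\nu)$ together with the invariance $|\xi R(\xi,A)y|_{\nu}\leq|y|_{\nu}$ yields $|\xi R(\xi,A)x|_{\mu}\leq|\xi R(\xi,A)x|_{\nu}\leq|x|_{\nu}\leq|x|$. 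Taking $\bigvee$ over $\mu$ completes the verification.

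The main obstacle will be the rigorous justification of the Neumann expansion and its interaction with $|\cdot|_{\lambda}$ in the random setting. Since $\alpha=\mu/\lambda$ is random (not a deterministic constant) and $1-\alpha$ can approach $1$ on events of positive measure, one cannot appeal to a uniform geometric bound; instead the Cauchy criterion for the partial sums must be verified pointwise a.s. using $\sum_{k=N}^{\infty}(1-\alpha)^{k}\to 0$ a.s., which gives convergence in probability and hence in the $(\varepsilon,\lambda)$-topology. Commuting the $L^{0}$-norm $|\cdot|_{\lambda}$ with the infinite series then requires its continuity, which itself follows from the norm equivalence $|\cdot|_{\lambda}\leq M\|\cdot\|$. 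This is precisely the $L^{0}$-norm transformation technique alluded to in the introduction of Section 4.
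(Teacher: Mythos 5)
Your proposal is correct and follows the same overall architecture as the paper's proof: the auxiliary norms $\|x\|_{\eta}=\bigvee_{n\geq 0}\|\eta^{n}R(\eta,A)^{n}x\|$, the sandwich $\|x\|\leq\|x\|_{\eta}\leq M\|x\|$, monotonicity in $\eta$, the definition $|x|=\bigvee_{\eta}\|x\|_{\eta}$, and the final verification of (4.3) via $\eta\vee\xi$ are all identical to what the paper does. The one genuine difference is how you establish the crucial contraction property $\|\xi R(\xi,A)y\|_{\eta}\leq\|y\|_{\eta}$ for $\xi\leq\eta$. The paper argues implicitly: setting $y=R(\xi,A)x$, the resolvent equation gives $y=R(\eta,A)\bigl(x+(\eta-\xi)y\bigr)$, whence $\|y\|_{\eta}\leq\frac{1}{\eta}\|x\|_{\eta}+\bigl(1-\frac{\xi}{\eta}\bigr)\|y\|_{\eta}$, and since $\|y\|_{\eta}\leq M\|y\|$ is finite the term $\bigl(1-\frac{\xi}{\eta}\bigr)\|y\|_{\eta}$ can be absorbed to yield $\xi\|y\|_{\eta}\leq\|x\|_{\eta}$ in two lines, with no convergence issues at all. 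You instead expand $\mu R(\mu,A)x$ as an explicit Neumann-type series in powers of $\lambda R(\lambda,A)$ and push $\|\cdot\|_{\lambda}$ through the series. That route also works, and you correctly identify its two delicate points in the random setting: the ratio $\alpha=\mu/\lambda$ is only a.s.\ strictly positive (so the tail $(1-\alpha)^{N}M^{2}\|x\|$ tends to $0$ only a.s., not uniformly, and one must pass to convergence in probability, i.e. the $(\varepsilon,\lambda)$-topology), and the interchange of $\|\cdot\|_{\lambda}$ with the limit needs the continuity coming from $\|\cdot\|_{\lambda}\leq M\|\cdot\|$. What your approach buys is an explicit formula and a self-contained convergence argument; what the paper's buys is brevity and complete avoidance of any limiting process --- in a setting where convergence arguments are exactly where the subtleties hide, the implicit argument is the safer and shorter one, but yours is sound as sketched.
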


\begin{proof} Let $\eta\in L^{0}_{++}({\mathcal F})$, define a mapping $\|\cdot\|_{\eta}:S\rightarrow L^{0}_{+}({\mathcal F})$ by
\begin{align}
\|x\|_{\eta}=\bigvee_{n\in N \cup \{0\}}\|\eta^{n}R(\eta,A)^{n}x\|,
\end{align}
then it is clear that $\|\cdot\|_{\eta}$ is an $L^{0}$-norm on $S$ and further
\begin{align}
\|x\|\leq\|x\|_{\eta}\leq M\|x\|
\end{align}
and
\begin{align}
\|\eta R(\eta,A)\|_{\eta}\leq1.
\end{align}

We claim that
\begin{align}
\|\xi R(\xi,A)\|_{\eta}\leq1
\end{align}
for any $\xi\in L^{0}_{++}({\mathcal F})$ and $\xi\leq\eta$.

In fact, if $y=R(\xi,A)x$, then $y=R(\eta,A)(x+(\eta-\xi)y)$ by the resolvent equation, one can obtain by (4.6) that
\begin{align}
\|y\|_{\eta}&=\|R(\eta,A)(x+(\eta-\xi)y)\|_{\eta}\nonumber\\
&\leq \|R(\eta,A)x\|_{\eta}+\|(\eta-\xi)R(\eta,A)y\|_{\eta}\nonumber\\
&\leq \frac{1}{\eta}\|\eta R(\eta,A)x\|_{\eta}+\frac{\eta-\xi}{\eta}\|\eta R(\eta,A)y\|_{\eta}\nonumber\\
&\leq \frac{1}{\eta}\|x\|_{\eta}+(1-\frac{\xi}{\eta})\|y\|_{\eta}\nonumber
\end{align}
whence $\xi\|y\|_{\eta}\leq\|x\|_{\eta}$ as claimed. From (4.5) and (4.6) it follows that
\begin{align}
\|\xi^{n}R(\xi,A)^{n}x\|\leq\|\xi^{n}R(\xi,A)^{n}x\|_{\eta}\leq\|x\|_{\eta}
\end{align}
for any $\xi\in L^{0}_{++}({\mathcal F})$ and $\xi\leq\eta$.
Thus $$\bigvee_{n\in N \cup \{0\}}\|\xi^{n}R(\xi,A)^{n}x\|\leq\|x\|_{\eta},$$
i.e.,
\begin{align}
\|x\|_{\xi}\leq\|x\|_{\eta}
\end{align}
for any $\xi\in L^{0}_{++}({\mathcal F})$ and $\xi\leq\eta$.

Next, define a mapping $|\cdot|:S\rightarrow L^{0}_{+}({\mathcal F})$ by
\begin{align}
|x|=\bigvee\{\|x\|_{\xi}~|~\xi\in L^{0}_{++}({\mathcal F})\}
\end{align}
for any $x\in S$, then clearly $|\cdot|$ is well defined and it is easy to check that $|\cdot|$ is an $L^{0}$-norm.

It follows from (4.5) that (4.2) holds. As for (4.3), it follows from (4.7) and (4.9) for any $\eta \in L^0_{++}(\mathcal{F})$ that $$\|\xi R(\xi,A)x\|_{\eta} \leq \|\xi R(\xi,A)x\|_{\eta \vee \xi} \leq \|x\|_{\eta \vee \xi} \leq |x|,$$ and hence $$|\xi R(\xi,A)x| \leq |x|.$$

This completes the proof.

%
\end{proof}

\begin{lemma}
Let $A:D(A)\rightarrow S$ be a module homomorphism. Then $(A,D(A))$ is the infinitesimal generator of a
$C_{0}$--semigroup $\{T(t):t\geq0\}$ satisfying $\|T(t)\|\leq M$ $(M\in L^{0}_{++}({\mathcal F})~and~M\geq1)$ if and only if

(i) $A$ is closed and $D(A)$ is dense in $S$.

(ii) The resolvent set $\rho(A)$ of $A$ contains $L^{0}_{++}({\mathcal F})$ and
\begin{align}
\|\xi^{n}R(\xi,A)^{n}\|\leq M
\end{align}
for any $\xi\in L^{0}_{++}({\mathcal F})$ and $n\in N$.
\end{lemma}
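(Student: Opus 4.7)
The plan is to prove the two implications separately. The forward direction will follow by the Laplace-transform representation of the resolvent, while the backward direction will be reduced to the contraction case (Theorem 4.4) via the equivalent-norm construction of Lemma 4.8.

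For the forward direction, assume $(A,D(A))$ generates a $C_{0}$--semigroup $\{T(t):t\geq0\}$ with $\|T(t)\|\leq M$. Since $\bigvee_{t\geq 0}\|T(t)\|\leq M\in L^{0}_{+}(\mathcal{F})$, the semigroup is a.s.u. bounded (in particular a.s. bounded), so Theorem 3.9 supplies density of $D(A)$ and the identity $T(t)x-x=\int_{0}^{t}T(s)Ax\,ds$ for $x\in D(A)$. Closedness of $A$ then follows by the standard argument: if $x_{n}\in D(A)$, $x_{n}\to x$ and $Ax_{n}\to y$, passing to the limit in this integral identity (using $\|T(s)\|\leq M$ to get convergence in probability uniformly on $[0,t]$) yields $T(t)x-x=\int_{0}^{t}T(s)y\,ds$; dividing by $t$ and letting $t\downarrow 0$ shows $x\in D(A)$ and $Ax=y$. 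For the resolvent bound, I would use the representations
\[ R(\xi,A)x=\int_{0}^{\infty}e^{-\xi s}T(s)x\,ds, \qquad R(\xi,A)^{n}x=\frac{1}{(n-1)!}\int_{0}^{\infty}s^{n-1}e^{-\xi s}T(s)x\,ds, \]
valid for $\xi\in L^{0}_{++}(\mathcal{F})$, obtained by iteration of the resolvent equation (or by induction on $n$). Inserting $\|T(s)\|\leq M$ and computing the resulting gamma-type integral gives $\|\xi^{n}R(\xi,A)^{n}x\|\leq M\|x\|$, which is (4.12).

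For the backward direction, apply Lemma 4.8 to obtain an equivalent $L^{0}$-norm $|\cdot|$ on $S$ satisfying $\|x\|\leq|x|\leq M\|x\|$ and $|\xi R(\xi,A)x|\leq|x|$ for all $x\in S$ and $\xi\in L^{0}_{++}(\mathcal{F})$. Because $\|\cdot\|$ and $|\cdot|$ are comparable with a constant $M\in L^{0}_{+}(\mathcal{F})$, they induce the same $(\varepsilon,\lambda)$-topology on $S$; hence $(S,|\cdot|)$ is again a complete $RN$-module, $D(A)$ is still dense, $A$ is still closed, and $\rho(A)$ is unchanged. The estimate $|\xi R(\xi,A)x|\leq|x|$ says that the operator norm of $\xi R(\xi,A)$ in $(S,|\cdot|)$ is bounded by $1$ for every $\xi\in L^{0}_{++}(\mathcal{F})$, which is precisely hypothesis (b) of Theorem 4.4 applied to the renormed module. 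Theorem 4.4 then yields a $C_{0}$--semigroup $\{T(t):t\geq 0\}$ generated by $A$ on $(S,|\cdot|)$ with $|T(t)x|\leq|x|$ for all $t\geq 0$ and $x\in S$. Translating back to the original norm gives
\[ \|T(t)x\|\leq|T(t)x|\leq|x|\leq M\|x\|, \]
so $\|T(t)\|\leq M$, and strong continuity transfers because the two topologies coincide.

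The main obstacle is the backward direction, and essentially all of its difficulty is concentrated in Lemma 4.8: once the equivalent norm $|\cdot|$ is in hand, the remainder is a straightforward invocation of the contraction Hille--Yosida theorem plus routine bookkeeping with the comparability $\|\cdot\|\leq|\cdot|\leq M\|\cdot\|$. The forward direction is comparatively standard, the only subtlety being the justification of differentiating the Laplace transform under the $(\varepsilon,\lambda)$-topology, which is handled by the uniform bound $\|T(s)\|\leq M$.
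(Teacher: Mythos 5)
Your backward direction is essentially identical to the paper's: invoke Lemma 4.8 to get the equivalent $L^{0}$-norm $|\cdot|$ with $\|x\|\leq|x|\leq M\|x\|$ and $|\xi R(\xi,A)x|\leq|x|$, apply the contraction theorem (Theorem 4.4) on the renormed module, and translate back. Your forward direction, however, takes a genuinely different route. The paper does \emph{not} use the Laplace-transform representation of resolvent powers; instead it renorms again, setting $|x|=\bigvee_{t\geq0}\|T(t)x\|$, observes that $\|x\|\leq|x|\leq M\|x\|$ and $|T(t)x|\leq|x|$ (so the semigroup is a contraction semigroup for $|\cdot|$), applies Theorem 4.4 in the new norm to get closedness, density and $|R(\xi,A)|\leq 1/\xi$, and then simply chains the inequalities $\|R(\xi,A)^{n}x\|\leq|R(\xi,A)^{n}x|\leq\xi^{-n}|x|\leq M\xi^{-n}\|x\|$. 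This makes the two directions of the lemma symmetric (both are reductions to the contraction case via a renorming) and, more importantly, it avoids the one step in your argument that is not routine in this setting: the identity $R(\xi,A)^{n}x=\frac{1}{(n-1)!}\int_{0}^{\infty}s^{n-1}e^{-\xi s}T(s)x\,ds$. In an $RN$ module the improper integral is a limit in the $(\varepsilon,\lambda)$-topology and $\xi$ is a random variable in $L^{0}_{++}(\mathcal F)$ (possibly not bounded away from $0$), so deriving this formula by ``iteration of the resolvent equation'' requires a Fubini-type interchange of improper integrals that the paper never establishes, and the alternative of ``differentiating the Laplace transform'' with respect to the random parameter $\xi$ is not even defined here. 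Your route can almost certainly be completed (the uniform bound $\|T(s)\|\leq M$ and the pointwise gamma integral $\int_{0}^{\infty}s^{n-1}e^{-\xi s}\,ds=(n-1)!/\xi^{n}$ give the needed domination), but you would owe the reader a lemma justifying that interchange; the paper's double-renorming argument sidesteps this entirely and also delivers closedness and density of $D(A)$ for free from Theorem 4.4, whereas you prove them separately via Theorem 3.9 and a limiting argument.
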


\begin{proof} Let $\{T(t):t\geq0\}$ be a $C_{0}$--semigroup on $S$ and $(A,D(A))$ its infinitesimal generator. If the $L^{0}$-norm on $S$ is changed to an equivalent $L^{0}$-norm, then clearly $\{T(t):t\geq0\}$ remains a $C_{0}$--semigroup on $S$ with the equivalent $L^{0}$-norm. The infinitesimal generator $(A,D(A))$ does not change, and the fact that $A$ is closed and densely defined does not change, either, when we pass to an equivalent $L^{0}$-norm on $S$.
All these are topological properties which are independent of a particular equivalent $L^{0}$-norm with which $S$ is endowed.

Now, let $(A,D(A))$ be the infinitesimal generator of a $C_{0}$--semigroup satisfying $\|T(t)\|\leq M$. Define a mapping $|\cdot|:S\rightarrow L^{0}_{+}({\mathcal F})$ by
\begin{align}
|x|=\bigvee_{t\geq0}\|T(t)x\|,
\end{align}
then $|\cdot|$ is well defined and clearly $|\cdot|$ is an $L^{0}$-norm on $S$.
Moreover
\begin{align}
\|x\|\leq|x|\leq M\|x\|
\end{align}
and therefore the $L^{0}$-norm $|\cdot|$ is equivalent to the original $L^{0}$-norm $\|\cdot\|$ on $S$. Since
\begin{align}
|T(t)x|&=\bigvee_{s\geq0}\|T(s)T(t)x\|\nonumber\\
&=\bigvee_{s\geq0}\|T(s+t)x\|\nonumber\\
&\leq \bigvee_{s\geq0}\|T(s)x\|\nonumber\\
&=|x|,
\end{align}it follows that $\{T(t):t\geq0\}$ is a $C_{0}$--semigroup of contractions on $S$ endowed with the $L^{0}$-norm $|\cdot|$. Thus, according to Theorem 4.4 and the remarks at the beginning of the proof, it follows that $A$ is closed and $D(A)$ is dense in $S$ and that
\begin{align}|R(\xi,A)|\leq\frac{1}{\xi}
\end{align}
for any $\xi\in L^{0}_{++}({\mathcal F})$. Consequently, combining Inequalities (4.13) and (4.15), we have
\begin{align}
\|R(\xi,A)^{n}x\|\leq|R(\xi,A)^{n}x|
\leq\frac{1}{\xi^{n}}|x|
\leq\frac{M}{\xi^{n}}\|x\|,\nonumber
\end{align}
which implies that the conditions (i) and (ii) are necessary.

Let the conditions (i) and (ii) be satisfied. Then, according to Lemma 4.8, there exists an $L^{0}$-norm $|\cdot|$ on $S$ satisfying Inequalities (4.2) and (4.3). Now, considering $S$ with the $L^{0}$-norm $|\cdot|$, it is clear that $A$ is a closed densely defined module homomorphism with $L^{0}_{++}({\mathcal F})\subset\rho(A)$ and $|R(\xi,A)|\leq\frac{1}{\xi}$ for any $\xi\in L^{0}_{++}({\mathcal F})$. Thus, according to Theorem 4.4, $A$ is also the infinitesimal generator of a $C_{0}$--semigroup of contractions on $S$ endowed with the norm $|\cdot|$. Returning to the origin $L^{0}$-norm $\|\cdot\|$, $A$ is again the infinitesimal generator of $\{T(t):t\geq0\}$ and further
\begin{align}
\|T(t)x\|\leq|T(t)x|
\leq|x|
\leq M\|x\|\nonumber
\end{align}
for any $x\in S$ so $\|T(t)\|\leq M$ as required. Therefore, the conditions (i) and (ii) are also sufficient.

This completes the proof of Lemma 4.9.
\end{proof}

\begin{remark}
Let $(A,D(A))$ be the infinitesimal generator of an a.s.u. bounded $C_{0}$--semigroup $\{T(t):t\geq0\}$, i.e., $\bigvee_{t\geq0}\|T(t)\|\in L^{0}_{+}({\mathcal F})$. Denote $M=\bigvee_{t\geq0}\|T(t)\|$, then $M\in L^{0}_{++}({\mathcal F})$ and $M\geq1$. Then the resolvent set of $A$ contains the set $\{\xi\in L^{0}({\mathcal F},C)~|~Re\xi\in L^{0}_{++}({\mathcal F})\}$ and for such $\xi$,
$\|R(\xi,A)\|\leq\frac{M}{Re\xi}.$
\end{remark}

In fact, for any $\xi\in L^{0}({\mathcal F},C)$ and $Re\xi\in L^{0}_{++}({\mathcal F})$,
\begin{align}
\|R(\xi,A)x\|&=\|\int_{0}^{+\infty}e^{-\xi t}T(t)xdt\|\nonumber\\
&\leq\int_{0}^{+\infty}|e^{-\xi t}|~\|T(t)x\|dt\nonumber\\
&\leq\int_{0}^{+\infty}e^{-Re\xi\cdot t}\cdot M\cdot\|x\|dt\nonumber\\
&=\frac{M}{Re\xi}\|x\|,\nonumber
\end{align}
thus the module homomorphism $R(\xi,A)x=\int_{0}^{+\infty}e^{-\lambda t}T(t)xdt$ is well-defined for $\xi\in L^{0}({\mathcal F},C)$ satisfying $Re\xi\in L^{0}_{++}({\mathcal F})$.
Further, according to Inequations (4.13) and (4.15),
$$\|R(\xi,A)^{n}x\|\leq|R(\xi,A)^{n}x|\leq\frac{1}{(Re\xi)^{n}}|x|\leq M\cdot\frac{1}{(Re\xi)^{n}}.$$
Consequently, based on the above remarks and Lemma 4.8, one can immediately obtain the following Lemma 4.11, which generalizes Proposition 4.1.

\begin{lemma}
Let $A:D(A)\rightarrow S$ be a module homomorphism. Then $A$ is the infinitesimal generator of a $C_{0}$--semigroup $\{T(t):t\geq0\}$, satisfying $\|T(t)\|\leq M$ $(M\in L^{0}_{++}({\mathcal F})~and~M\geq1)$, if and only if

(i) $A$ is closed and $D(A)$ is dense in $S$.

(ii) The resolvent set $\rho(A)$ of $A$ contains the set $\{\xi\in L^{0}({\mathcal F},C)~|~Re\xi\in L^{0}_{++}({\mathcal F})\}$ and for such $\xi$,
$\|R(\xi,A)^{n}\|\leq\frac{M}{(Re\xi)^{n}}$
for any $n\in N$.
\end{lemma}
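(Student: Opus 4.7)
The plan is to derive both directions from Lemma 4.9 together with the $L^{0}$-norm renormalization of Lemma 4.8 and the Laplace-transform representation of the resolvent.

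For the sufficiency direction, I would observe that $L^{0}_{++}(\mathcal{F}) \subset \{\xi\in L^{0}(\mathcal{F},C)\,|\,Re\,\xi\in L^{0}_{++}(\mathcal{F})\}$, so restricting $\xi$ in condition (ii) to real values in $L^{0}_{++}(\mathcal{F})$ recovers exactly the hypotheses of Lemma 4.9: $A$ is closed and densely defined, with $\|\xi^{n}R(\xi,A)^{n}\|\leq M$ for every $\xi\in L^{0}_{++}(\mathcal{F})$ and $n\in N$. Lemma 4.9 then immediately produces the desired $C_{0}$--semigroup $\{T(t):t\geq 0\}$ generated by $A$ with $\|T(t)\|\leq M$.

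For the necessity direction, Lemma 4.9 supplies condition (i) and, via its proof, an equivalent $L^{0}$-norm $|\cdot|$ on $S$ satisfying $\|x\|\leq|x|\leq M\|x\|$ under which $\{T(t):t\geq 0\}$ becomes a contraction $C_{0}$--semigroup. To extend the resolvent estimate from real to complex $\xi$ with $Re\,\xi\in L^{0}_{++}(\mathcal{F})$, I would first verify, as in Remark 4.10, that the Laplace integral $R(\xi,A)x=\int_{0}^{+\infty}e^{-\xi t}T(t)x\,dt$ converges (since $|e^{-\xi t}|=e^{-Re\,\xi\cdot t}$ and $\|T(t)x\|\leq M\|x\|$) and inverts $\xi I-A$, so such $\xi$ lie in $\rho(A)$. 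Then, applying Proposition 4.1 (equivalently Theorem 4.4(c)) to $\{T(t):t\geq 0\}$ endowed with $|\cdot|$ gives $|R(\xi,A)|\leq 1/Re\,\xi$, and by iteration $|R(\xi,A)^{n}|\leq 1/(Re\,\xi)^{n}$. Passing back through the norm equivalence yields
\[
\|R(\xi,A)^{n}x\|\leq|R(\xi,A)^{n}x|\leq\frac{|x|}{(Re\,\xi)^{n}}\leq\frac{M\|x\|}{(Re\,\xi)^{n}},
\]
which is exactly condition (ii).

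The main obstacle is the iteration needed to obtain the $n$-th power bound; in the classical theory this is precisely what forces the renormalization trick. Here, that trick is already packaged in Lemma 4.8 (construction of the equivalent contractive norm), and Proposition 4.1 supplies the complex-parameter contraction resolvent estimate. What remains is purely the bookkeeping of combining the scalar bound on $|R(\xi,A)^{n}|$ with the norm-equivalence constants $1$ and $M$, so no new analytical difficulty arises beyond what Lemmas 4.8 and 4.9 have already settled.
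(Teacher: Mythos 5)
Your proposal is correct and follows essentially the same route as the paper: sufficiency by restricting condition (ii) to real $\xi\in L^{0}_{++}(\mathcal F)$ and invoking Lemma 4.9, and necessity by passing to the equivalent contractive $L^{0}$-norm $|x|=\bigvee_{t\geq0}\|T(t)x\|$ from the proof of Lemma 4.9, using the Laplace-transform representation of $R(\xi,A)$ to handle complex $\xi$ with $Re\,\xi\in L^{0}_{++}(\mathcal F)$ (as in Remark 4.10), and then combining $|R(\xi,A)^{n}x|\leq|x|/(Re\,\xi)^{n}$ with the norm equivalence $\|x\|\leq|x|\leq M\|x\|$. The paper leaves these steps implicit ("one can immediately obtain"), but your plan fills them in exactly as intended.
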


Based on Lemmas 4.9 and 4.11, we can now prove Theorem 4.7 as follows.

\begin{proof}[Proof of Theorem 4.7] $(a)\Leftrightarrow (c).$  Since $\{T(t):t\geq0\}$ is a.s. bounded, it follows from Proposition 3.8 that there exist $M,\tau\in L^{0}_{+}({\mathcal F})$ and $M\geq1$ such that $\|T(t)\|\leq Me^{\tau t}$.
Set $\widetilde{T}(t)=e^{-\tau t}T(t)$ for any $t\geq 0$, then it is easy to check that $\{\widetilde{T}(t):t\geq0\}$ is still a $C_{0}$--semigroup with the infinitesimal generator $A-\tau I$, and further $A$ is the infinitesimal generator of $\{T(t):t\geq0\}$ if and only if $A-\tau I$ is the infinitesimal generator of $\{\widetilde{T}(t):t\geq0\}$. Moreover, $\|\widetilde{T}(t)\|=\|e^{-\tau t}T(t)\|=e^{-\tau t}\|T(t)\|\leq M$ for any $t\geq0$. Thus, according to Lemma 4.11, it follows that $A$ is the infinitesimal generator of $\{T(t):t\geq0\}$ if and only if

$(i)^{'}$ $A-\tau I$ is closed and $D(A-\tau I)$ is dense in $S$.

$(ii)^{'}$ The resolvent set $\rho(A-\tau I)$ of $A-\tau I$ contains the set $\{\xi\in L^{0}({\mathcal F},C)~|~Re\xi\in L^{0}_{++}({\mathcal F})\}$ and for such $\xi$,
$\|R(\xi,A-\tau I)^{n}\|\leq\frac{M}{(Re\xi)^{n}}$
for any $n\in N$.

Clearly, $A-\tau I$ is closed if and only if $A$ is closed, and $D(A-\tau I)=D(A)$, thus $D(A)$ is dense in $S$.
Further, observing that $\tau\in\rho(A)$ if and only if $0\in\rho(A-\tau I)$,
hence
\begin{align}
\rho(A-\tau I)=\rho(A)-\tau:=\{\xi-\tau~|~\xi\in\rho(A)\}.
\end{align}

Next, according to $(ii)^{'}$, it follows that
\begin{align}
\{\xi\in L^{0}({\mathcal F},C)~|~Re\xi\in L^{0}_{++}({\mathcal F})\}\subset\rho(A-\tau I).
\end{align}
Combining (4.16) and (4.17) yields
$$\{\xi\in L^{0}({\mathcal F},C)~|~Re\xi\in L^{0}_{++}({\mathcal F})\}\subset\rho(A)-\tau,$$
i.e.,
$$\{\xi\in L^{0}({\mathcal F},C)~|~Re\xi\in L^{0}_{++}({\mathcal F})\}+\tau\subset\rho(A),$$
where $\{\xi\in L^{0}({\mathcal F},C)~|~Re\xi\in L^{0}_{++}({\mathcal F})\}+\tau:=\{\xi+\tau~|~\xi\in L^{0}({\mathcal F},C)~and~Re\xi\in L^{0}_{++}({\mathcal F})\}$,
i.e.,
$$\{\xi+\tau\in L^{0}({\mathcal F},C)~|~Re\xi\in L^{0}_{++}({\mathcal F})\}\subset\rho(A),$$
i.e.,
$$\{\xi\in L^{0}({\mathcal F},C)~|~Re(\xi-\tau)\in L^{0}_{++}({\mathcal F})\}\subset\rho(A),$$
thus
\begin{align}
\{\xi\in L^{0}({\mathcal F},C)~|~(Re\xi-\tau)\in L^{0}_{++}({\mathcal F})\}\subset\rho(A).
\end{align}

Conversely, if $\{\xi\in L^{0}({\mathcal F},C)~|~(Re\xi-\tau)\in L^{0}_{++}({\mathcal F})\}\subset\rho(A),$ then one can similarly obtain $\{\xi\in L^{0}({\mathcal F},C)~|~Re\xi\in L^{0}_{++}({\mathcal F})\}\subset\rho(A-\tau I)$. Thus we have
$\{\xi\in L^{0}({\mathcal F},C)~|~(Re\xi-\tau)\in L^{0}_{++}({\mathcal F})\}\subset\rho(A)$ if and only if $\{\xi\in L^{0}({\mathcal F},C)~|~Re\xi\in L^{0}_{++}({\mathcal F})\}\subset\rho(A-\tau I)$.

In the sequel, if $(ii)^{'}$ holds, it follows that (4.18) holds.
Moreover, it is clear that
$$R(\xi,A)x=\int_{0}^{+\infty}e^{-\xi s}T(s)xds$$
and
\begin{align}
R(\xi,A-\tau I)x&=\int_{0}^{+\infty}e^{-\xi s}\widetilde{T}(s)xds\nonumber\\
&=\int_{0}^{+\infty}e^{-\xi s}e^{-\tau s}T(s)xds\nonumber\\
&=\int_{0}^{+\infty}e^{-(\xi+\tau)s}T(s)xds\nonumber\\
&=R(\xi+\tau,A)x\nonumber
\end{align}
for any $x\in S$, $\xi\in L^{0}({\mathcal F},C)$ and $Re\xi\in L^{0}_{++}({\mathcal F})$. Thus, due to $(ii)^{'}$ and (4.18), we have
\begin{align}
\|R(\xi+\tau,A)^{n}x\|&=\|R(\xi,A-\tau I)^{n}x\|\leq\frac{M}{(Re\xi)^{n}}\|x\|,\nonumber
\end{align}
let $\eta=\xi+\tau$, then $\eta\in\rho(A)$ by (4.18) and further $$Re\xi=Re(\eta-\tau)=Re\eta-\tau.$$ Thus, for any $\eta\in L^{0}({\mathcal F},C)$ and $(Re\eta-\tau)\in L^{0}_{++}({\mathcal F})$, it follows that
$$\|R(\eta,A)^{n}x\|\leq\frac{M}{(Re\eta-\tau)^{n}}\|x\|.$$
Consequently,
$$\|R(\xi,A)^{n}\|\leq\frac{M}{(Re\xi-\tau)^{n}}$$
for any $\xi\in L^{0}({\mathcal F},C)$, $(Re\xi-\tau)\in L^{0}_{++}({\mathcal F})$ and $n\in N$.

Conversely, if $$\|R(\xi,A)^{n}\|\leq\frac{M}{(Re\xi-\tau)^{n}}$$
for any $\xi\in L^{0}({\mathcal F},C)$, $(Re\xi-\tau)\in L^{0}_{++}({\mathcal F})$ and $n\in N$,
let $\eta=\xi-\tau$, then $Re\eta=Re\xi-\tau$ and
\begin{align}
\|R(\eta, A-\tau I)^{n}\|=\|R(\eta+\tau, A)^{n}\|
\leq\frac{M}{(Re\eta)^{n}},\nonumber
\end{align}
which implies that \begin{align}
\|R(\xi, A-\tau I)^{n}\|
\leq\frac{M}{(Re\xi)^{n}}\nonumber
\end{align}
for any $\xi\in L^{0}({\mathcal F},C)$, $Re\xi\in L^{0}_{++}({\mathcal F})$ and $n\in N$.

$(a)\Leftrightarrow (b).$ According to Lemma 4.8, it can be similarly proved.

This completes the proof of Theorem 4.7.
\end{proof}

\begin{remark} According to Example 3.12 and Remark 3.13, one can conclude that it is necessary to require the almost sure boundedness for such $C_{0}$--semigroups in Theorem 4.7 (a), which is quite different from the classical case.
\end{remark}

If we choose ${\mathcal F}=\{\Omega,\Phi\}$, then a complete $RN$ module $S$ reduces to a Banach space $X$ and the a.s. bounded $C_{0}$--semigroup $\{T(t):t\geq0\}$ reduces to an ordinary $C_{0}$--semigroup on $X$, which leads to the following Corollary 4.13.

\begin{corollary}(see \cite{Engel2000}) Let $(A,D(A))$ be a linear operator on a Banach space $X$. Suppose that $\tau\in R$, $M \in R^+$ and $M\geq1$. Then the following assertions are equivalent.

(a) $(A,D(A))$ generates a $C_{0}$--semigroup $\{T(t):t\geq0\}$ satisfying
$\|T(t)\|\leq Me^{\tau t}$
for any $t\geq0$.

(b) $(A,D(A))$ is closed, densely defined, and for each $\xi>\tau$ one has $\xi\in \rho(A)$ and
$\|[(\xi-\tau)R(\xi,A)]^{n}\|\leq M$
for any $n\in N$.

(c) $(A,D(A))$ is closed, densely defined, and for each $\xi\in C$ with $Re\xi>\tau$ one has $\xi\in \rho(A)$ and
$\|R(\xi,A)^{n}\|\leq\frac{M}{(Re\xi-\tau)^{n}}$
for any $n\in N$.

\end{corollary}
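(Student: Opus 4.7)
The plan is to derive Corollary 4.13 as a direct specialization of Theorem 4.7 to the trivial $\sigma$-algebra $\mathcal{F}=\{\Omega,\Phi\}$; essentially no new work is required, and the bulk of the argument is verifying that every $L^{0}$-valued object in Theorem 4.7 collapses to its classical counterpart.

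First, I would record the identifications induced by $\mathcal{F}=\{\Omega,\Phi\}$. Every $\mathcal{F}$-measurable equivalence class is a constant, so $L^{0}(\mathcal{F},K)=K$, $L^{0}(\mathcal{F},R)=R$, $L^{0}_{+}(\mathcal{F})=[0,\infty)$, and $L^{0}_{++}(\mathcal{F})=(0,\infty)$. Consequently a complete $RN$ module $(S,\|\cdot\|)$ over $K$ becomes an ordinary Banach space $(X,\|\cdot\|)$ over $K$; the space $B(S)$ of continuous module homomorphisms becomes the Banach algebra of bounded linear operators on $X$ with its usual operator norm; and the $(\varepsilon,\lambda)$-topology coincides with the norm topology. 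Under these identifications the infinitesimal generator, the resolvent set $\rho(A)$, and the resolvent $R(\xi,A)$ in the random sense agree with their classical definitions, and the integral representation $R(\xi,A)x=\int_{0}^{+\infty}e^{-\xi s}T(s)x\,ds$ recovers the familiar Laplace-transform formula.

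Second, I would translate the hypotheses and conclusions of Theorem 4.7 term by term. With $\tau,M\in R$ and $M\geq 1$, the constraints $(\xi-\tau)\in L^{0}_{++}(\mathcal{F})$ and $(\mathrm{Re}\,\xi-\tau)\in L^{0}_{++}(\mathcal{F})$ become $\xi>\tau$ and $\mathrm{Re}\,\xi>\tau$, respectively, and the $L^{0}$-norm inequalities become scalar operator-norm inequalities in (b) and (c). The only clause deserving attention is the almost-sure-boundedness requirement built into Theorem 4.7 (a): I would observe that with real constants $M,\tau$ the growth bound $\|T(t)\|\leq Me^{\tau t}$ automatically yields $\sup_{t\in[0,L]}\|T(t)\|\leq Me^{|\tau|L}<\infty$ for each finite $L$, so that this extra hypothesis is vacuous in the classical case. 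This is exactly the phenomenon flagged in Remark 4.12. Invoking Theorem 4.7 then gives the equivalence (a)$\Leftrightarrow$(b)$\Leftrightarrow$(c) of Corollary 4.13.

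I do not anticipate any genuine obstacle. The proof amounts to bookkeeping: checking that the resolvent set, the resolvent operator, the operator norm, and the generator are all compatible under the reduction $\mathcal{F}=\{\Omega,\Phi\}$, and noting that the otherwise essential a.s.-boundedness clause degenerates in the scalar setting. If I wanted to make the argument self-contained I would additionally cite the classical fact that every $C_{0}$-semigroup on a Banach space is locally bounded, which makes the a.s.-boundedness hypothesis in Theorem 4.7 (a) automatic and thereby explains why the classical Hille--Yosida statement dispenses with such a hypothesis entirely.
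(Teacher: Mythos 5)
Your proposal is correct and follows essentially the same route as the paper, which obtains Corollary 4.13 by specializing Theorem 4.7 to $\mathcal{F}=\{\Omega,\Phi\}$ and notes (Remark 4.14) that the a.s.-boundedness clause is automatic in the Banach-space setting. Your additional bookkeeping of the identifications $L^{0}(\mathcal{F},K)=K$, etc., and the observation that local boundedness of a classical $C_{0}$-semigroup makes the extra hypothesis in Theorem 4.7(a) vacuous, match the paper's (much terser) justification.
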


\begin{remark} It is worth mentioning that Theorem 4.7 is a generalization of Corollary 4.13 since a continuous function from a finite closed interval $[a, b]$ to a Banach space $X$ is naturally bounded.
\end{remark}

\section*{Acknowledgment}
This paper is supported by the Natural Science
Foundation of Tianjin (Grant No. 18JCYBJC18900), the Humanities and Social Science Foundation of Ministry of Education (Grant No. 20YJC790174) and the NNSF of China (Grant Nos. 11301380 and 11571369).



\begin{thebibliography}{99}


\bibitem{Dunford}
 N. Dunford and J. T. Schwartz, {\it Linear Operators(I)}, Interscience, New York, 1957.

\bibitem{Engel2000} K.--J. Engel and R. Nagel, {\it One--Parameter Semigroups for Linear Evolution
Equations}, Springer--Verlag, New York, 2000.

\bibitem{Goldstein} J. A. Goldstein, {\it Semigroups of Linear Operators and Applications}, Oxford
Mathematical Monographs, Oxford Univesity Press, New York, 1985.



\bibitem{Guo93} T. X. Guo, {\it A new approach to probabilistic functional analysis}, Proceedings of the first China doctoral academic conference. The China National Defense and Industry Press, Beijing, 1993.

\bibitem{Guo96a} T. X. Guo, {\it The Radon--Nikod\'{y}m property of conjugate spaces and the $w^*$--equivalence theorem for $w^*$--measurable functions}, Sci. China Math. Ser. A {\bf 39} (1996), 1034--1041.







\bibitem{Guo08}
 T. X. Guo, {\it The relation of Banach--Alaoglu theorem and Banach--Bourbaki--Kakutani--\v{S}mulian theorem in complete random normed modules to stratification structure}, Sci. China Math. Ser. A {\bf 51(9)} (2008), 1651--1663.

\bibitem{Guorelations}  T. X. Guo, {\it Relations between some basic results derived from
  two kinds of topologies for a random locally convex module}, J. Funct. Anal. {\bf 258(9)} (2010), 3024--3047.

\bibitem{Guoprogress} T. X. Guo, {\it Recent progress in random metric theory and
  its applications to conditional risk measures}, Sci.
China Ser. A {\bf 54(4)} (2011), 633--660.

\bibitem{Guohomomorphism} T. X. Guo, {\it On some basic theorems of continuous module homomorphisms between random normed
modules}, J. Funct. Space Appl. Article ID 989102, 2013, 13 pages.

\bibitem{Guo2019}T. X. Guo, {\it Optimization of conditional convex risk measures}, Abstract of the invited report in the 8th International Conference of Chinese Mathematicians, Beijing, June 8-June 14, 2019, arXiv:1910.06085v4.

\bibitem{GL05} T. X. Guo and S. B. Li, {\it The James theorem in complete
random normed modules}, J. Math. Anal. Appl. {\bf 308}, (2005), 257--265.


\bibitem{Guozhang2019} T. X. Guo, E. X. Zhang, Y. C. Wang and Z. C. Guo, {\it Two fixed point theorems in complete random normed modules
and their applications to backward stochastic equations}, J. Math. Anal. Appl. {\bf 483(2)} (2020). (Online, DOI:10.1016/j.jmaa.2019.123644).

\bibitem{Guoconvex} T. X. Guo, E. X. Zhang, M. Z. Wu, B. X. Yang and G. Yuan, {\it On random convex analysis}, J. Nonlinear and Convex Anal. {\bf 18(11)} (2017), 1967--1996.

\bibitem{Guozhang} T. X. Guo and X. Zhang, {\it Stone's representation
theorem of a group of random unitary operators
on complete complex random inner product
modules (in Chinese)}, Sci. Sin. Math. {\bf 42(3)} (2012), 181--202.










\bibitem{Pazy} A. Pazy,
{\it Semigroups of Linear Operators and Applications
 to Partial Differential Equations}, World Publishing
 Company, Beijing, 2006.

\bibitem{SS} B. Schweizer and A. Sklar, {\it Probabilistic Metric
Spaces}, Elsevier, New York, 1983; reissued by Dover
Publications, New York, 2005.

\bibitem{Skorohod} A. V. Skorohod, {\it Random Linear Operators}, D.Reidel
 Publishing Company, Holland, 1984.




%
%

\bibitem{Thang} D. H. Thang, T. C. Son and Ng. Thinh, {\it Semigroups of continuous module homomorphisms on complex complete random normed modules}, Lith. Math. J. {\bf 59(2)} (2019), 229--250.









%
\bibitem{Zhang} X. Zhang, {\it On mean ergodic semigroups of random linear operators}, Proc. Japan Acad. Ser. A {\bf 88(4)} (2012), 53--58.


\bibitem{ZhangLiu} X. Zhang and M. Liu, {\it On almost surely bounded semigroups of random linear operators}, J. Math. Phy. {\bf 54(5)} (2013),
(053517) 10 pages.




\end{thebibliography}
\end{document}